\numberwithin{equation}{section}
\newtheorem{theorem}{Theorem}[section]
\newtheorem{proposition}{Proposition}[section]
\newtheorem{lemma}{Lemma}[section]
\newtheorem{remark}{Remark}[section]
\theoremstyle{definition}
\def\XXint#1#2#3{{\setbox0=\hbox{$#1{#2#3}{\int}$}
     \vcenter{\hbox{$#2#3$}}\kern-.5\wd0}}
\def\e{\varepsilon}
\def\R{{\mathbb R}}
\renewcommand{\d}{\delta }
\begin{document}

\title{Classification of blow-up limits for the sinh-Gordon equation}

\author{Aleks Jevnikar, Juncheng Wei, Wen Yang}

\address{ Aleks Jevnikar,~University of Rome `Tor Vergata', Via della Ricerca Scientifica 1, 00133 Roma, Italy}
\email{jevnikar@mat.uniroma2.it}

\address{ Juncheng ~Wei,~Department of Mathematics, University of British Columbia,
Vancouver, BC V6T 1Z2, Canada}
\email{jcwei@math.ubc.ca}

\address{ Wen ~Yang,~Center for Advanced Study in Theoretical Sciences (CASTS), National Taiwan University, Taipei 10617, Taiwan}
\email{math.yangwen@gmail.com}

\thanks{A.J. is partially supported by the PRIN project {\em Variational and perturbative aspects of nonlinear differential problems}. J.W. and W.Y. are partially supported by the NSERC of Canada. Part of the paper was written when A.J. was visiting the Mathematics Department at the University of British Columbia. He would like to thank the institute for the hospitality and for the financial support.}

\keywords{Geometric PDEs, Sinh-Gordon equation, Blow-up analysis.}

\subjclass[2000]{35J61, 35R01, 35B44.}

\begin{abstract}
The aim of this paper is to use a selection process and a careful study of the interaction of bubbling solutions to show a classification result for the blow-up values of the elliptic sinh-Gordon equation
$$
	\Delta u+h_1e^u-h_2e^{-u}=0 \qquad \mathrm{in}~B_1\subset\R^2.
$$	
In particular we get that the blow-up values are multiple of $8\pi.$ It generalizes the result of Jost, Wang, Ye and Zhou \cite{jwyz} where the extra assumption $h_1 = h_2$ is crucially used.
\end{abstract}

\maketitle

\section{Introduction}

\medskip

In this paper we mainly focus on the weak limit of the energy sequence for the following equation
\begin{equation}
\label{eq}
\Delta u+h_1e^u-h_2e^{-u}=0~\mathrm{in}~B_1\subset\R^2,
\end{equation}
where $h_1,h_2$ are smooth positive functions and $B_1$ is the unit ball in $\R^2$.
\medskip

Equation (\ref{eq}) arises in the study of the equilibrium turbulence with arbitrarily signed vortices \cite{c,mp,l,n}, and was first proposed by Onsager \cite{o}, Joyce and Montgomery \cite{jm} from different statistical arguments. When the nonlinear term $e^{-u}$ in \eqref{eq} is replaced by $\tau e^{-\gamma u}$ with $\tau,\gamma>0$, the equation \eqref{eq} stands for a more general type of equation which arises in the context of the statistical
mechanics description of 2D-turbulence. For the recent developments of such equation. We refer the readers to \cite{pr,rt,rtzz} and references therein. Moreover, it plays also a very important role in the study of the construction of constant mean curvature surfaces initiated by Wente, see \cite{jwyz,w2} and the references therein.
\medskip

When $h_2 \equiv 0$ the equation \eqref{eq} reduces to the classic Liouville equation
\begin{equation} \label{liouv}
\Delta u + h e^u=0 \qquad \mathrm{in}~B_1\subset\R^2.
\end{equation}
Equation \eqref{liouv} is important in the geometry of manifolds as it rules the change of Gaussian curvature under conformal deformation of the metric, see \cite{ba, cha, cha2, ly, s}. Another motivation for the study of \eqref{liouv} is in mathematical physics as it models the mean field of Euler flows, see \cite{clmp} and \cite{ki}. This equation has been very much studied in the literature; there are by now many results regarding existence, compactness of solutions, bubbling behavior, etc. We refer the interested reader to the reviews \cite{mal} and \cite{tar}.
\medskip

Wente's work \cite{w2} on the constant mean curvature surfaces and the work of Sacks-Uhlenbeck \cite{su} concerning harmonic maps led to investigate the blow-up phenomena for variational problems that possess a lack of compactness. Later, in a series work of Steffen \cite{s1}, Struwe \cite{s2} and Brezis, Coron \cite{bc1}, the program of understanding the blow-up for constant mean curvature surfaces was completed.
\medskip

As many geometric problems, also \eqref{eq} (and \eqref{liouv}) presents loss of compactness phenomena, as its solutions might blow-up. Concerning \eqref{liouv} it was proved in \cite{bm, li, li-sha} that for a sequence of blow-up solutions $u_k$ to \eqref{liouv} (relatively to $h^k$) with blow-up point $\bar x$ it holds
\begin{equation} \label{quant}
	\lim_{\d \to 0} \lim_{k \to \infty} \int_{B_\d (\bar x)} h^k e^{u_k} = 8\pi.
\end{equation}
Somehow, each blow-up point has a quantized local mass.
\medskip

On the other hand, the blow-up behavior of solutions of equation \eqref{eq} is not yet developed in full generality; this analysis was carried out in \cite{os, os1} and \cite{jwyz} under the assumption that $h_1=h_2$ or $h_1,h_2$ are constants. In particular, by using the deep connection of the sinh-Gordon equation and differential geometry, in \cite{jwyz} Jost, Wang, Ye and Zhou proved an analogous quantization property as the one in \eqref{quant}, namely that the blow-up limits are multiple of $8\pi$, see Theorem 1.1, Corollary~1.2 and Remark 4.5 in the latter paper. The latter blow-up situation may indeed occur, see \cite{ew} and \cite{gp}. We point out that the assumption $h_1=h_2$ (or $h_1,h_2$ constants) in \cite{jwyz} is crucially used in order to provide a geometric
interpretation of equation \eqref{eq} in terms of constant mean curvature surfaces and harmonic maps (see also \cite{w2}). In this way they transfer the problem into a blow-up phenomenon for harmonic maps. The core of the argument is then to apply a result about no loss of energy during bubbling off for a
sequence of harmonic maps, which was proved in \cite{jo, pa}.
\medskip

The study of the blow-up limits is interesting by itself. However, it yields also important results: we point out here the compactness property of the following sequence of solutions to a variant of \eqref{eq}:
\begin{equation}
\label{variant}
\Delta u_k + \rho_1^k\frac{H_1e^{u_k}}{\int_{\Omega}H_1e^{u_k}}-\rho_2^k\frac{H_2e^{-u_k}}{\int_{\Omega}H_2e^{-u_k}}=0~\mathrm{in}~\Omega\subset\R^2,\quad
u_k=0~\mathrm{on}~\partial\Omega,
\end{equation}
where $\Omega$ is a smooth bounded domain in $\mathbb{R}^2$, $\rho_1^k , \rho_2^k$ are non-negative real parameters and $H_1, H_2$ two \emph{fixed} positive smooth functions (see \cite{bjmr, jev, jev2, jev3, zh} and the references therein). In fact, from the local quantization result in \cite{jwyz} and some standard analysis (see \cite{bat,bm,lwy}) one finds the following global compactness result.
\begin{theorem} \label{compact}
Suppose $\rho_1^k,\rho_2^k$ are two fixed non-negative real numbers and both are not equal to $8\pi\mathbb{N}.$ Then the set of solutions to \eqref{variant} are uniformly bounded.
\end{theorem}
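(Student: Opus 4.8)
The plan is to argue by contradiction: suppose $\{u_k\}$ are solutions of \eqref{variant} with $\|u_k\|_{L^\infty(\Omega)}\to\infty$. I would first put the problem in the local form \eqref{eq} by setting
$$
h_1^k:=\frac{\rho_1 H_1}{\int_\Omega H_1e^{u_k}},\qquad h_2^k:=\frac{\rho_2 H_2}{\int_\Omega H_2e^{-u_k}},
$$
so that $\Delta u_k+h_1^ke^{u_k}-h_2^ke^{-u_k}=0$ in $\Omega$, with prescribed masses $\int_\Omega h_1^ke^{u_k}=\rho_1$ and $\int_\Omega h_2^ke^{-u_k}=\rho_2$. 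The first step is to bound the normalizing constants $c_1^k:=\int_\Omega H_1e^{u_k}$ and $c_2^k:=\int_\Omega H_2e^{-u_k}$ from below: since $-\Delta u_k$ has uniformly bounded $L^1$-norm and $u_k=0$ on $\partial\Omega$, the standard Dirichlet boundary estimate (as in \cite{bm}; see also \cite{bat,lwy}) keeps $u_k$ bounded on a fixed collar of $\partial\Omega$, so $c_i^k\ge c_0>0$. Hence the $h_i^k$ are bounded in $C^1(\overline\Omega)$, and along a subsequence $h_i^k\to h_i\ge 0$ in $C^1(\overline\Omega)$, where $h_i\equiv 0$ exactly when $c_i^k\to\infty$.

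I would then run the Brezis--Merle concentration--compactness analysis for this local equation. Along a subsequence $h_1^ke^{u_k}\,dx$ and $h_2^ke^{-u_k}\,dx$ converge weakly-$*$ to bounded measures of masses $\rho_1$ and $\rho_2$; let $\Sigma_1,\Sigma_2$ be the (necessarily finite) sets of points carrying at least $4\pi$ of mass for the respective limits, and put $\Sigma:=\Sigma_1\cup\Sigma_2$. The boundary estimate gives $\Sigma\subset\Omega$; if $\Sigma=\emptyset$ then $u_k$ is bounded in $L^\infty(\Omega)$, contrary to assumption, so $\Sigma\neq\emptyset$. Away from $\Sigma$ the $u_k$ are locally bounded and $u_k\to u_\infty$ in $C^2_{loc}(\Omega\setminus\Sigma)$, and one obtains the mass splitting $\rho_i=\sum_{p\in\Sigma_i}\sigma_i(p)+R_i$, with local masses $\sigma_i(p):=\lim_{\delta\to0}\lim_{k\to\infty}\int_{B_\delta(p)}h_i^ke^{(-1)^{i+1}u_k}$ and residual $R_i\ge 0$. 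At this stage I invoke the local quantization of Jost--Wang--Ye--Zhou \cite{jwyz}: each $\sigma_i(p)$ is a positive integer multiple of $8\pi$.

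The crux is that $R_i=0$ whenever $\Sigma_i\neq\emptyset$. Suppose $\Sigma_1\neq\emptyset$ and pick $p\in\Sigma_1$. The point $p$ carries a bubble of $e^{u_k}$, and matching its inner (standard Liouville) profile --- at some scale $\lambda_k^{-1}\to0$ and height $\to+\infty$ --- to the outer logarithmic behavior of $u_\infty$ at $p$ forces $\int_{B_\delta(p)}e^{u_k}\to\infty$; for a simple bubble one has $u_k(x)\approx M_k-\frac{\sigma_1(p)}{2\pi}\log(\lambda_k|x-p|)$ on the neck $\lambda_k^{-1}\lesssim|x-p|\lesssim\delta$ with $M_k\approx\frac{\sigma_1(p)}{2\pi}\log\lambda_k$, so $\int_{B_\delta(p)}e^{u_k}\gtrsim\lambda_k^{\frac{\sigma_1(p)}{2\pi}-2}\to\infty$. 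Hence $c_1^k=\int_\Omega H_1e^{u_k}\to\infty$, so $h_1\equiv0$; as $u_k$ stays bounded above off $\Sigma_1$, this yields $h_1^ke^{u_k}\to0$ locally uniformly on $\Omega\setminus\Sigma$, i.e.\ $R_1=0$. Combined with the quantization this gives $\rho_1=\sum_{p\in\Sigma_1}\sigma_1(p)\in 8\pi\mathbb{N}$, against $\rho_1\notin 8\pi\mathbb{N}$; so $\Sigma_1=\emptyset$, and applying the same reasoning to $-u_k$ (which solves \eqref{eq} with $h_1,h_2$ interchanged) gives $\Sigma_2=\emptyset$. Then $\Sigma=\emptyset$, a contradiction. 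Thus $\|u_k\|_{L^\infty}$ is bounded, and elliptic regularity upgrades this to a $C^{2,\alpha}$ bound, proving the theorem.

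The step I expect to be the main obstacle is precisely the assertion that a bubble of $e^{u_k}$ forces $\int_\Omega H_1e^{u_k}\to\infty$ --- equivalently, that no regular part of the limiting measure can survive next to a concentration of $e^{u_k}$. The quantitative neck estimate above settles the case of an isolated bubble, but the genuinely delicate situation is a \emph{mixed} blow-up point at which bubbles of $e^{u_k}$ and of $e^{-u_k}$ interlace over a possibly nontrivial background $u_\infty$: there the $e^{-u_k}$-bubbles can partly cancel the growth, and one must appeal to the precise classification of the possible blow-up profiles (which is the content of the present paper, and of \cite{jwyz} when $h_1=h_2$) to rule this out. By contrast, the Dirichlet boundary estimate, the Brezis--Merle alternative, and the mass-splitting identity are by now routine (see \cite{bat,bm,lwy}).
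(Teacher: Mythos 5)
The paper itself gives no proof of Theorem \ref{compact}; it is stated as a consequence of the local quantization in \cite{jwyz} plus ``standard analysis'' from \cite{bat,bm,lwy}, so your proposal is an attempt to supply exactly that analysis, and its architecture (boundary estimate, lower bound on the normalizers, Brezis--Merle concentration, local quantization, vanishing of the residual, contradiction with $\rho_i\notin 8\pi\mathbb{N}$) is the intended route. The genuine gap is the one you flag yourself: the vanishing of the residual $R_i$. Your neck computation does not prove it; it presupposes it. If $h_1^k$ stays bounded below, a standard bubble of $h_1^ke^{u_k}$ carries bounded $\int e^{u_k}$ and it is $u_k$ that tends to $-\infty$ on $\partial B_\delta(p)$; your ansatz $M_k\approx\frac{\sigma_1(p)}{2\pi}\log\lambda_k$ is precisely the matching to a \emph{finite} outer profile, i.e.\ it already assumes the regime $h_1^k\to0$ you are trying to reach. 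Moreover the exponent governing the outer singularity is not $\sigma_1(p)$ but the \emph{difference} of the two local masses: from the Green representation, $u_\infty(x)=\frac{1}{2\pi}\bigl(\hat\sigma_1(p)-\hat\sigma_2(p)\bigr)\log|x-p|^{-1}+O(1)$ near $p$, so if hypothetically $\hat\sigma_1(p)=\hat\sigma_2(p)>0$ nothing would force $\int H_1e^{u_k}\to\infty$. The correct way to close the gap --- including the mixed, interlaced case --- is not a finer neck estimate but the Pohozaev relation of Proposition \ref{pr3.1} together with the classification: at every blow-up point $(\sigma_1(p),\sigma_2(p))$ is (up to $o(1)$ and up to swapping) $\bigl(2m(m+1),2m(m-1)\bigr)$, hence $|\sigma_1(p)-\sigma_2(p)|=4m\geq 4>2$. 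Consequently $e^{u_\infty}$ (or $e^{-u_\infty}$) is non-integrable near $p$, Fatou's lemma on $\Omega\setminus\Sigma$ gives $\int_\Omega H_1e^{u_k}\to\infty$ (resp.\ for $H_2e^{-u_k}$), the corresponding $h_i^k\to0$, and $R_i=0$; since every blow-up point has $\sigma_1(p)\neq\sigma_2(p)$, at least one of $\rho_1,\rho_2$ is then a sum of multiples of $8\pi$, contradicting the hypothesis.

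A second point you should make explicit, because it affects the legitimacy of invoking the quantization at all: Theorem \ref{th} (and likewise \cite{jwyz}) is proved under the nondegeneracy assumption \eqref{1.4}, $1/C\leq h_i^k\leq C$, whereas the mechanism that kills the residual is precisely $h_1^k=\rho_1H_1/\int_\Omega H_1e^{u_k}\to0$. So the local quantization cannot be applied verbatim to the local form of \eqref{variant}; one must pass to shifted functions $u_k+a_k$ (under which the coefficients transform as $h_1^k\mapsto h_1^ke^{a_k}$, $h_2^k\mapsto h_2^ke^{-a_k}$) and verify that the quantized local masses are unchanged. This is part of the ``standard analysis'' alluded to in the paper, but it is not automatic and deserves at least a sentence in any complete write-up.
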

The latter property is a key ingredient in proving both existence and multiplicity results of \eqref{variant}, see for example \cite{bjmr, jev, jev2, jev3}.
\medskip

We return now to the topic of this paper. We shall study here the same subject of \cite{jwyz} in a more general case (i.e., $h_1$, $h_2$ are two different positive $C^3$ functions) by using pure analytic method. The argument is interesting by itself and for the first time it is used for this class of equations.

\

Let $u_k$ be a sequence of blow-up solutions
\begin{equation}
\label{1.2}
\Delta u_k+h_1^ke^{u_k}-h_2^ke^{-u_k}=0,
\end{equation}
with $0$ being its only blow-up point in $B_1$, i.e.:
\begin{equation}
\label{1.3}
\max_{K\subset\subset B_1\setminus\{0\}}|u_k|\leq C(K),\quad \max_{x\in B_1}\{|u_k(x)|\}\rightarrow\infty.
\end{equation}
Throughout the paper we will call $\int_{B_1}h_1^ke^{u_k}$ the energy of $u_k$ (analogously is defined the energy of $-u_k$). We assume moreover
\begin{equation}
\label{1.4}
\frac1C\leq h_i^k(x)\leq C, ~ \|h_i^k(x)\|_{C^3(B_1)}\leq C, \qquad \forall x\in B_1, ~i=1,2
\end{equation}
for some positive constant $C$ and we suppose that $u_k$ has bounded oscillation on $\partial B_1$ and a uniform bound on its energy:
\begin{align}
\label{1.5}
\begin{split}
|u_k(x)-u_k(y)|\leq C,\qquad \forall~x,y\in\partial B_1, \\
\int_{B_1}h_1^ke^{u_k}\leq C, \qquad \int_{B_1}h_2^ke^{-u_k}\leq C,
\end{split}
\end{align}
where $C$ is independent of $k.$
\medskip

Our main result is concerned with the limit energy of $u_k$. Let
\begin{align}
\label{1.6}
\begin{split}
&\sigma_1=\lim_{\delta\rightarrow0}\lim_{k\rightarrow\infty}\frac{1}{2\pi}\int_{B_{\delta}}h_1^ke^{u_k}, \\
&\sigma_2=\lim_{\delta\rightarrow0}\lim_{k\rightarrow\infty}\frac{1}{2\pi}\int_{B_{\delta}}h_2^ke^{-u_k}.
\end{split}
\end{align}
Let $\Sigma$ be the following finite set of points:
\begin{align}
\label{1.8}
\Sigma=\biggr\{	(\sigma_1,\sigma_2)\;=\;\bigr(2m(m+1),2m(m-1)\bigr) \;\mathrm{or}\; \bigr(2m(m-1),2m(m+1)\bigr),\quad m\in\mathbb{N}\biggr\}.
\end{align}

\begin{theorem}
\label{th}
Let $\sigma_i$ and $\Sigma$ be defined as in (\ref{1.6}) and \eqref{1.8}, respectively. Suppose $u_k$ satisfies (\ref{1.2}), (\ref{1.3}), (\ref{1.5}) and $h_i^k$ satisfy (\ref{1.4}). Then $(\sigma_1,\sigma_2)\in\Sigma$.
\end{theorem}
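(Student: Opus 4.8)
The plan is to follow the standard blow-up/selection scheme developed for the Liouville equation and its systems, adapted to the sign-indefinite situation of the sinh-Gordon equation. First I would set up a \emph{bubbling tree} via a selection process: starting from the global maximum point of $|u_k|$, rescale so that the dominant nonlinearity (say $h_1^k e^{u_k}$ if $u_k(0)\to+\infty$) converges, after scaling, to an entire solution of $\Delta U + e^U = 0$ on $\R^2$ with finite mass, hence by the Chen–Li classification a standard bubble of mass $8\pi$, i.e. contributing $(\sigma_1,\sigma_2)=(4,0)$ locally (the other nonlinearity $h_2^k e^{-u_k}$ being negligible at that scale). If the remaining energy after removing this bubble is not exhausted, iterate on annular regions at larger scales; the key point is that in the intermediate (neck) regions one has a Pohozaev-type balancing identity that forces the accumulated pair $(\sigma_1,\sigma_2)$ at each scale to satisfy the quadratic relation
\[
\sigma_1^2 + \sigma_2^2 - 2\sigma_1\sigma_2 - 2\sigma_1 - 2\sigma_2 = 0,
\]
which is precisely the equation of $\Sigma$: writing $\sigma_1-\sigma_2 = 2m$ one gets $\sigma_1+\sigma_2 = 2m^2$, hence $(\sigma_1,\sigma_2)=(m^2+m,\,m^2-m)=(2\binom{m+1}{2},2\binom{m}{2})$ or the symmetric pair. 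So the strategy is: (i) prove the Pohozaev identity on balls $B_\delta$ and pass to the limit $k\to\infty$, $\delta\to 0$ to obtain the quadratic constraint on the total $(\sigma_1,\sigma_2)$; (ii) prove that $(\sigma_1,\sigma_2)$ lies in a \emph{discrete} set by the bubbling-tree induction, each node adding an integer amount; and (iii) combine (i) and (ii) to conclude membership in $\Sigma$.

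In more detail, I would proceed as follows. \textbf{Step 1 (energy growth and first bubble).} Using the hypotheses \eqref{1.4}, \eqref{1.5} and a Brezis–Merle alternative for the sinh-Gordon equation, show that blow-up is of ``spherical Harnack'' type away from finitely many points, and near $0$ extract after rescaling a bubble. The sign-indefiniteness means one must keep track of which of $\pm u_k$ blows up; the Harnack inequality of Brezis–Merle has to be applied to the subsolution part. \textbf{Step 2 (Pohozaev identity).} Multiply \eqref{1.2} by $x\cdot\nabla u_k$ and integrate over $B_\delta$. The bulk terms produce $\int_{B_\delta}(h_1^k e^{u_k}+h_2^k e^{-u_k}) + \int_{B_\delta} (x\cdot\nabla h_i^k)e^{\pm u_k}$ (the gradient-of-$h$ terms are controlled by the $C^3$ bound and vanish in the limit since the mass concentrates at a point), and the boundary terms, after using the decay estimate $u_k(x) = -2\sigma_1\log|x| + 2\sigma_2\log|x| + O(1)$ on $\partial B_\delta$ obtained from a Green's representation, give the quadratic form in $\sigma_1,\sigma_2$ above. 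This is the analogue of the computation in \cite{jwyz} but done analytically rather than via the harmonic-map reformulation. \textbf{Step 3 (iteration/quantization).} Run the selection process of Li–Shafrir type: in each neck region the solution is close to a radial solution of the Liouville-type ODE, and the total masses up to that scale again satisfy the quadratic identity with \emph{integer} increments; an induction on the (finite) number of bubbling scales closes the argument.

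The main obstacle, and the point where the generality $h_1\neq h_2$ really bites, is \textbf{Step 3}: controlling the \emph{interaction} of the two nonlinearities $h_1^k e^{u_k}$ and $h_2^k e^{-u_k}$ in the neck regions. When $h_1 = h_2$ one can use the geometric (harmonic map) structure to get ``no energy loss in the neck'' for free; without it, one must prove directly that no energy escapes into the necks and that the two species do not conspire to produce a limit pair off the list $\Sigma$. Concretely, one needs a sharp pointwise upper bound (a ``$\sup+\inf$'' or a Harnack-type estimate) on $u_k$ in the annuli separating bubbles, uniform in $k$, so that the combined nonlinearity is integrable with the right constant; this is where the careful study of the interaction of bubbling solutions advertised in the abstract must be carried out, presumably by a delicate ODE comparison and a Pohozaev identity applied on a sequence of shrinking annuli. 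I would expect the bulk of the paper's technical work, and any genuinely new ideas, to be concentrated exactly here.
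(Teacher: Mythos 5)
Your overall architecture (selection process, global Pohozaev identity, iteration over neck regions, combination of bubbling scales) is the same as the paper's, and you correctly locate the difficulty in the neck analysis. However, there are two problems. First, an algebraic one: the correct Pohozaev relation for the normalization $\sigma_i=\lim\frac{1}{2\pi}\int h_i e^{\pm u_k}$ is $(\sigma_1-\sigma_2)^2=4(\sigma_1+\sigma_2)$ (Proposition~\ref{pr3.1}), not $(\sigma_1-\sigma_2)^2=2(\sigma_1+\sigma_2)$ as you wrote. Your version is inconsistent with your own Step 1, where a single Chen--Li bubble gives $(\sigma_1,\sigma_2)=(4,0)$ (and $16\neq 8$), and it produces the set $\{(m^2+m,m^2-m)\}$, which is not $\Sigma$: for instance $(6,2)$ would be allowed but is not of the form $\bigl(2m(m+1),2m(m-1)\bigr)$. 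With the correct constant one writes $\sigma_1-\sigma_2=4m$, $\sigma_1+\sigma_2=4m^2$ and recovers $\Sigma$.

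Second, and more seriously, your Step 3 contains the actual gap. The global Pohozaev identity only constrains $(\sigma_1,\sigma_2)$ to a parabola, so everything hinges on showing that the limit pair lies in a discrete set; you assert that ``each node adds an integer amount'' but give no mechanism for why the increments are quantized. The paper's mechanism (Sections~\ref{asymptotic}--\ref{combination}) is the following dichotomy, which your sketch does not supply: on any circle where \emph{both} $u_k$ and $-u_k$ have fast decay (i.e.\ $|u_k|\leq -2\log|x|-N_k$), a \emph{local} Pohozaev identity holds and, combined with the fact that the fast-decaying component's local mass cannot change across such a region (Lemma~\ref{le4.1}), forces the pair of local masses onto the discrete list; when instead one component has slow decay, its mass grows while the other component's mass stays frozen at a multiple of $4$, until a new fast-decay radius is reached and the local Pohozaev identity pins down the new value (Lemma~\ref{le4.2}, giving the ladder $(4,0)\to(4,12)\to(24,12)\to(24,40)\to\cdots$). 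This alternation, together with the hierarchical combination of bubbling disks into ``groups'' at comparable scales, is the new analytic content replacing the harmonic-map energy identity of \cite{jwyz}; a sup$+$inf estimate in the necks, which you propose, is neither established nor needed --- the paper gets by with the bounded-oscillation Lemma~\ref{le2.1} and the monotonicity of the spherical averages $\frac{d}{dr}\overline{v}_i^k$ expressed through the local masses.
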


\

\begin{remark}
Theorem \ref{th} yields an improvement of the compactness result in Theorem \ref{compact}, which holds now for arbitrary functions $H_1, H_2$. As a byproduct we get an improvement of both existence and multiplicity results concerning \eqref{variant} in \cite{bjmr, jev2, jev3}. Moreover, it will be crucially used in a forthcoming paper about the Leray-Schauder topological degree associated to \eqref{variant}.
\end{remark}

\begin{remark}
Observe that differently from the Liouville equation \eqref{liouv} and the systems of $n$ equations in \cite{lwz0}, where the blow-up limits (see for example \eqref{quant}) could assume a finite number of possibilities, we obtain here an infinite number of possibilities for \eqref{1.6}, see \eqref{1.8}. The reason for this fact is the different form of the Pohozaev identity associated to the blow-up limits \eqref{1.6}, see Proposition~\ref{pr3.1}.
\end{remark}

\

The first step in the proof of Theorem \ref{th} is to introduce a selection process for describing the situations when blow-up of solutions to (\ref{1.2}) occurs. This argument has been widely used in the framework of prescribed curvature problems, see for example \cite{cl2, ly, s3}. It was later modified by Lin, Wei and Zhang in dealing with general systems of $n$ equations to locate the bubbling area which consists of a finite number of disks, see \cite{lwz0}. Roughly speaking, the idea is that in each disk the blow-up solution have the energy of a globally defined system. We use the same technique for equation (\ref{eq}). Next we prove that in each bubbling disk the energy of at least one of $u_k$ and $-u_k$ is multiple of $4$. Combining then areas closed to each other we deduce that the energy limit of at least one component of $u_k$ and $-u_k$ is multiple of $4$. In this procedure we use the same terminology "group" introduced in \cite{lwz0} to describe bubbling disks closest to each other and relatively far away from other disks. Then, Theorem \ref{th} is a direct consequence of a global Pohozaev identity.
\medskip

The organization of this paper is as follows. In Section \ref{s:selection} we introduce the selection process for the class of equations as in \eqref{eq}, in Section \ref{s:pohozaev} we prove a Pohozaev identity which is the key element in proving Theorem \ref{th}, in Section~\ref{asymptotic} we study the asymptotic behavior of the solutions around the blow-up area and in Section \ref{combination} we finally prove Theorem \ref{th} by a suitable combination of the bubbling areas.

\

\begin{center}
\textbf{Notation}
\end{center}

\

The symbol $B_r(p)$ stands for the open metric ball of
radius $r$ and center $p$. To simplify the notation we will write $B_r$ for balls which are centered at $0$. We will use the notation $a\sim b$ for two comparable quantities $a$ and $b$.

\medskip

Throughout the paper the letter $C$ will stand for large constants which
are allowed to vary among different formulas or even within the same lines.
When we want to stress the dependence of the constants on some
parameter (or parameters), we add subscripts to $C$, as $C_\d$,
etc. We will write $o_{\alpha}(1)$ to denote
quantities that tend to $0$ as $\alpha \to 0$ or $\alpha \to
+\infty$; we will similarly use the symbol
$O_\alpha(1)$ for bounded quantities.

\vspace{1cm}
\section{A selection process for the Sinh-Gordon equation} \label{s:selection}

\medskip

In this section we introduce a selection process for the Sinh-Gordon equation \eqref{eq}. In particular, we will select a finite number of bubbling areas. This will be the first tool to be used in the proof of the main Theorem \ref{th}.
\begin{proposition}
\label{pr2.1}
Let $u_k$ be a sequence of blow-up solutions to (\ref{1.2}) that satisfy (\ref{1.3}) and (\ref{1.5}), and suppose that $h_i^k$ satisfy (\ref{1.4}). Then, there exist finite sequences of points $\Sigma_k:=\{x_1^k,\dots,x_m^k\}$ (all $x_j^k\rightarrow0,~j=1,\dots,m$) and positive numbers $l_1^k,\dots,l_m^k\rightarrow0$ such that
\begin{enumerate}
  \item $\displaystyle{|u_k|(x_j^k)=\max_{B_{l_j^k}(x_j^k)}\{|u_k|\}}$ for $j=1,\dots,m.$ \vspace{0.15cm}
  \item $\exp\bigr(\frac12|u_k|(x_j^k)\bigr)\,l_j^k\rightarrow\infty$ for $j=1,\dots,m.$\vspace{0.1cm}
  \item Let $\e_k=e^{-\frac12M_k},$ where $\displaystyle{M_k=\max_{B_{l_j^k}(x_j^k)}|u_k|}$. In each $B_{l_j^k}(x_j^k)$ we define the dilated functions
	\begin{align} \label{2.1}
  \begin{split}
  &v_1^k(y)=u_k(\e_k y + x_k^j)+2\log\e_k, 	\\
  &v_2^k(y)=-u_k(\e_k y + x_k^j)+2\log\e_k.
  \end{split}
  \end{align}
  Then it holds that one of the $v_1^k,v_2^k$ converges to a function $v$ in $C_{loc}^2(\R^2)$ which satisfies the Liouville equation \eqref{liouv}, while the other one tends to minus infinity over all compact subsets of $\R^2.$
  \item There exits a constant $C_1>0$ independent of $k$ such that
  \begin{equation*}
  |u_k|(x)+2\log\mathrm{dist}(x,\Sigma_k)\leq C_1, \qquad \forall x\in B_1.
  \end{equation*}
\end{enumerate}
\end{proposition}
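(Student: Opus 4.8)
The plan is to adapt the iterative bubble-selection scheme used for single Liouville-type equations and for the $n$-system in \cite{lwz0} to the present sign-changing setting, where blow-up of $u_k$ may be driven by either $e^{u_k}$ (regions where $u_k\to+\infty$) or $e^{-u_k}$ (regions where $u_k\to-\infty$). First I would start the selection with a single point: choose $x_1^k$ to be a point where $|u_k|$ attains its global maximum $M_k$ over $B_1$, which by \eqref{1.3} satisfies $M_k\to\infty$; the oscillation bound \eqref{1.5} on $\partial B_1$ together with the energy bound forces $x_1^k\to 0$. Set $\e_k=e^{-M_k/2}$ and define the dilations $v_i^k$ as in \eqref{2.1}. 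Since $|u_k|\le M_k$ on the relevant ball, the rescaled functions are bounded above by $O(1)$, and exactly one of the two equations survives the scaling limit: the component for which $u_k$ equals $+M_k$ (resp. $-M_k$) at $x_1^k$ produces, after rescaling, a nonnegative right-hand side $h_i^k(x_1^k+\e_k y)e^{v_i^k}$ that is uniformly bounded in $L^1$ by \eqref{1.5}, while the other component's coefficient $e^{-v_i^k-2(\text{something})}$ collapses. Standard elliptic estimates (Harnack / Brezis--Merle) then give $C^2_{loc}$ convergence of the surviving $v_i^k$ to a solution $v$ of $\Delta v+h_\infty e^v=0$ with finite mass on $\R^2$, and the other rescaled function tends to $-\infty$ locally uniformly. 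This establishes items (1)--(3) for $j=1$ with the natural choice of $l_1^k$ (e.g. a fixed small fraction of the distance where the rescaled picture is still good, or simply $l_1^k=\e_k^{\theta}$ for suitable $\theta\in(0,1)$, adjusted so that $\exp(\tfrac12|u_k|(x_1^k))\,l_1^k\to\infty$).

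Next I would run the inductive step. Suppose points $x_1^k,\dots,x_j^k$ and radii $l_1^k,\dots,l_j^k$ have been chosen so that (1)--(3) hold. Consider the function
\begin{equation*}
w_k(x)=|u_k|(x)+2\log\,\mathrm{dist}(x,\{x_1^k,\dots,x_j^k\}).
\end{equation*}
If $\sup_{B_1}w_k\le C$ for a constant independent of $k$, the process stops and item (4) holds with this set. Otherwise, pick $x_{j+1}^k$ to be a maximum point of $w_k$, so that $w_k(x_{j+1}^k)\to\infty$; a short argument (as in \cite{lwz0}) shows $|u_k|(x_{j+1}^k)\to\infty$ and $x_{j+1}^k\to 0$, and that the distance $d_k:=\mathrm{dist}(x_{j+1}^k,\{x_1^k,\dots,x_j^k\})$ satisfies $e^{|u_k|(x_{j+1}^k)/2}d_k\to\infty$. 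Choosing $l_{j+1}^k$ to be a suitable fraction of $d_k$ (so that the new ball is essentially disjoint from the previously selected bubbling regions and $(2)$ holds), one reruns the rescaling argument around $x_{j+1}^k$ exactly as in the base case, obtaining a new Liouville bubble in one of the two components. The key point that makes (3) work again is that inside $B_{l_{j+1}^k}(x_{j+1}^k)$ the definition of $w_k$ forces $|u_k|(x)+2\log l_{j+1}^k$ to be comparable to $|u_k|(x_{j+1}^k)+2\log l_{j+1}^k\to\infty$, so after dilation by $\e_k^{(j+1)}=e^{-|u_k|(x_{j+1}^k)/2}$ the right picture is uniformly controlled above and the Brezis--Merle alternative again yields convergence to a Liouville solution, while the opposite component blows down.

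The reason the process terminates after finitely many steps is the uniform energy bound \eqref{1.5}: each newly selected bubble carries a definite amount of energy bounded below (each Liouville limit on $\R^2$ has mass at least $8\pi$ by \cite{li, li-sha}), contributed to $\int h_1^ke^{u_k}$ or $\int h_2^ke^{-u_k}$ on essentially disjoint regions $B_{l_j^k/2}(x_j^k)$; since the total of each of these two integrals is bounded by $C$, the number $m$ of selected points is at most $C/(8\pi)$-ish, hence bounded independently of $k$ (after passing to a subsequence one may take $m$ fixed). Once the process stops, item (4) holds by construction of the stopping criterion. The main obstacle I anticipate is item (3): one must carefully verify that in \emph{each} selected ball exactly one component survives the rescaling and the other genuinely tends to $-\infty$ locally uniformly — the delicate case being when a new bubble of one sign sits relatively close to an old bubble of the \emph{opposite} sign, where one has to use the logarithmic cutoff $w_k$ and the bound $e^{|u_k|(x_{j+1}^k)/2}d_k\to\infty$ to ensure the interaction term from the other component does not contaminate the limit. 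Handling the coefficients $h_i^k$ (which are only $C^3$ and $k$-dependent) is routine given \eqref{1.4}: they converge along subsequences to a positive constant at the scale of each bubble, so the limit equation is a standard Liouville equation.
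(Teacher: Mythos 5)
Your proposal follows essentially the same route as the paper: an iterative Schoen-type selection driven by the function $|u_k|(x)+2\log\mathrm{dist}(x,\Sigma_{k,j})$, rescaling at scale $e^{-\frac12|u_k|(x_j^k)}$, the observation that since $v_1^k+v_2^k=4\log\e_k\to-\infty$ and both are bounded above only the component vanishing at the origin can converge (to a Chen--Li bubble of mass $8\pi$), and termination via the energy bound \eqref{1.5}. The only substantive difference is in the inductive step: you take the new center directly at a maximum point of $w_k$, whereas the paper first finds $q_k$ maximizing $|u_k|(x)+2\log|x-x_1^k|$ and then runs a secondary selection inside $B_{d_k}(q_k)$ with the weight $2\log\bigl(d_k-|x-q_k|\bigr)$ to produce $p_k$ and the key upper bound $|u_k|\leq -u_k(p_k)+2\log 2$ on $B_{l_k}(p_k)$; your one-step version yields the same kind of upper bound on $B_{d_k/2}$ by the triangle inequality, so both are viable. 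One small point you skip but should include to get item (1) as literally stated: a maximum point of $w_k$ (or the paper's $p_k$) is in general not an exact local maximum of $|u_k|$, so one must relocate to the point where $\max_{B}|u_k|$ is attained and verify, via the $C_{loc}^2$ convergence of the surviving rescaled component, that it sits at bounded rescaled distance from the selected point, after which $l_{j+1}^k$ is shrunk accordingly --- this is exactly the adjustment the paper carries out when passing from $p_k$ to $x_2^k$.
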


\begin{proof}
Without loss of generality we may assume that
\begin{equation*}
u_k(x_1^k)=\max_{x\in B_1}|u_k|(x).
\end{equation*}
By assumption we clearly have $x_1^k\rightarrow0.$ Let $(v_1^k,v_2^k)$ be defined as in (\ref{2.1}) with $x_j^k$, $M_k$ replaced by $x_1^k$ and $u_k(x_1^k)$ respectively. Observe that by construction we have $v_i^k\leq0, i=1,2$. Therefore, exploiting the equation \eqref{eq} we can easily see that $|\Delta v_i^k|$ is bounded. By standard elliptic estimate, $|v_i^k(z)-v_i^k(0)|$ is uniformly bounded in any compact subset of $\R^2.$ By construction $v_1^k(0)=0$ and hence $v_1^k$ converges in $C_{loc}^2(\R^2)$ to a function $v_1,$ while the other component is forced to  satisfy $v_2^k\rightarrow-\infty$ over all compact subsets of $\R^2$. The limit of $v_1^k$ satisfies the following equation:
\begin{equation}
\label{2.2}
\Delta v_1+h_1e^{v_1}=0 \qquad \mathrm{in}~\R^2,
\end{equation}
where $h_i=\lim_{k\rightarrow+\infty}h_i^k(x_1^k)$. From (\ref{1.5}), we have
$$\int_{\R^2}h_1e^{v_1}<C.$$
By the classification result due to Chen and Li \cite{cl} it follows that
\begin{align*}
\int_{\R^2}h_1e^{v_1}=8\pi \quad \mathrm{and} \quad v_1(x)=-4\log|x|+O(1),~|x|>2.
\end{align*}
Clearly we can take $R_k\rightarrow\infty$ such that
\begin{equation}
\label{2.3}
v_1^k(y)+2\log|y|\leq C, \qquad |y|\leq R_k.
\end{equation}
In other words we can find $l_1^k\rightarrow0$ such that
\begin{align*}
u_k(x)+2\log|x-x_1^k|\leq C, \qquad |x-x_1^k|\leq l_1^k,
\end{align*}
and
\begin{align*}
e^{\frac12u_1^k(x_1^k)}l_1^k\rightarrow\infty, \qquad \mathrm{as}~k\rightarrow\infty.
\end{align*}

\

\noindent Consider now the function
$$
|u_k(x)|+2\log|x-x_1^k|
$$
and let $q_k$ be the point where $\max_{|x|\leq1} \bigr(|u_k(x)|+2\log|x-x_1^k|\bigr)$ is achieved. Suppose that
\begin{equation}\label{ipo}
\max_{|x|\leq 1}\big(|u_k(x)|+2\log|x-x_1^k|\bigr)\rightarrow\infty.
\end{equation}
Then we define $d_k=\frac12|q_k-x_1^k|$ and
\begin{align*}
\begin{split}
&S_1^k(x)=u_k(x)+2\log\bigr(d_k-|x-q_k|\bigr), \\
&S_2^k(x)=-u_k(x)+2\log\bigr(d_k-|x-q_k|\bigr),
\end{split} \qquad \mbox{in } B_{d_k}(q_k).
\end{align*}
By construction we observe that $S_i^k(x)\rightarrow-\infty$ as $x$ approaches $\partial B_{d_k}(q_k)$ while
\begin{align*}
\max\{S_1^k(q_k),S_2^k(q_k)\}=|u_k(q_k)|+2\log d_k\rightarrow\infty
\end{align*}
by assumption \eqref{ipo}. Let $p_k$ be where $\max_{x\in\overline{B}_{d_k}(q_k)}\{S_1^k,S_2^k\}$ is attained. Without loss of generality, we assume that $S_2^k(p_k)=\max_{x\in\overline{B}_{d_k}(q_k)}\{S_1^k,S_2^k\}$. Then
\begin{equation}
\label{2.4}
-u_k(p_k)+2\log\bigr(d_k-|p_k-q_k|\bigr)\geq \max\{S_1^k(q_k),S_2^k(q_k)\}\rightarrow\infty.
\end{equation}
Let $l_k=\frac12(d_k-|p_k-q_k|)$. By the definition of $p_k$ and $l_k$ we observe that, for $y\in B_{l_k}(p_k)$ it holds
$$
|u_k(y)|+2\log\big(d_k-|y-q_k|\big)\leq -u_k(p_k)+2\log(2l_k),
$$
$$
d_k-|y-q_k|\geq d_k-|p_k-q_k|-|y-p_k|\geq l_k.
$$
Therefore we get
\begin{align}
\label{2.5}
|u_k(y)|\leq -u_k(p_k)+2\log2, \quad \forall~y\in B_{l_k}(p_k).
\end{align}

\

\noindent Now let $R_k=e^{-\frac12u_k(p_k)}l_k$ and define the following functions:
\begin{align*}
&\hat{v}_1^k(y)=u_k(p_k+e^{\frac12u_k(p_k)}y)+u_k(p_k),\\
&\hat{v}_2^k(y)=-u_k(p_k+e^{\frac12u_k(p_k)}y)+u_k(p_k).
\end{align*}
Observe that $R_k\rightarrow\infty$ by (\ref{2.4}). Moreover, $|\Delta\hat{v}_i^k|$ is bounded in $B_{R_k}(0)$. Similarly as before $\hat{v}_2^k(y)$ converges to a function $v_2$ such that
\begin{align*}
\Delta v_2+h_2(p_k)\,e^{v_2}=0.
\end{align*}
On the other hand, $\hat{v}_1^k(y)$ converges uniformly to $-\infty$ over all compact subsets of $\R^2$. Consider now $u_k,-u_k$ in $B_{l_k}(p_k)$ and suppose $x_2^k$ is the point where $\max_{B_{l_k}(p_k)}|u_k|$ is obtained: it is not difficult to see that $-u_k(x_2^k)=\max_{B_{l_k}(p_k)}|u_k|.$ Moreover, we can find $l_2^k$ such that
\begin{align*}
|u_k(x)|+2\log|x-x_2^k|\leq C, \qquad \mathrm{for}~|x-x_2^k|\leq l_2^k.
\end{align*}
By (\ref{2.5}) we have $-u_k(x_2^k) + u_k(p_k)\leq 2\log2$ and we observe that
$$
	\hat{v}_2\Big(e^{-\frac12u_k(p_k)}(x_2^k-p_k)\Big) - \hat{v}_2(0) = -u_k(x_2^k)+u_k(p_k) \leq 2 \log 2.
$$
Therefore we deduce that $e^{-\frac12u_k(p_k)}|x_2^k-p_k|=O(1).$ It follows that we can choose $l_2^k\leq\frac12l_k$ such that $e^{-\frac12u_k(x_2^k)}l_2^k\rightarrow\infty$. Then we re-scale $u_k,-u_k$ around $x_2^k$ and let $v_i^k$ defined in (\ref{2.1}) which will satisfy (1) and (2) in Proposition \ref{pr2.1}. Moreover, it is easy to see that $B_{l_1^k}(x_1^k)\cap B_{l_2^k}(x_2^k)=\emptyset.$
\medskip

\

\noindent In this way we have defined the selection process. To continue it, we let $\Sigma_{k,2}:=\{x_1^k,x_2^k\}$ and consider
\begin{align*}
\max_{x\in B_1}|u_k(x)|+2\log\mathrm{dist}(x,\Sigma_{k,2}).
\end{align*}
If there exists a subsequence such that the quantity above tends to infinity we use the same argument to get $x_3^k$ and $l_3^k$. Since each bubble area contributes a positive energy, the process stops after finite steps due to the bound on the energy (\ref{1.5}). Finally we get
\begin{align*}
\Sigma_k=\{x_1^k,\dots,x_m^k\}
\end{align*}
and it holds
\begin{equation}
\label{2.6}
|u_k(x)|+2\log\mathrm{dist}(x,\Sigma_k)\leq C,
\end{equation}
which concludes the proof.
\end{proof}
\medskip

\begin{lemma}
\label{le2.1}
Let $\Sigma_k=\{x_1^k,\cdots,x_m^k\}$ be the blow-up set obtained in Proposition~\ref{pr2.1}. Then for all $x\in B_1\setminus\Sigma_k$, there exists a constant $C$ independent of $x$ and $k$ such that
\begin{equation*}
|u_k(x_1)-u_k(x_2)|\leq C, \qquad \forall~x_1,x_2\in B\bigr(x,d(x,\Sigma_k)/2\bigr).
\end{equation*}
\end{lemma}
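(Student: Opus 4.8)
\textbf{Proof proposal for Lemma \ref{le2.1}.}

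The plan is to derive this Harnack-type inequality from property (4) of Proposition \ref{pr2.1} together with standard elliptic estimates on a ball scaled to unit size. Fix $x \in B_1 \setminus \Sigma_k$ and set $r = d(x,\Sigma_k)$; the claim concerns oscillation of $u_k$ on $B(x, r/2)$. Since $B(x, 3r/4)$ still stays away from $\Sigma_k$ (its points have distance at least $r/4$ from $\Sigma_k$), property (4) gives, for every $y \in B(x,3r/4)$,
$$
|u_k(y)| \leq C_1 - 2\log \mathrm{dist}(y,\Sigma_k) \leq C_1 - 2\log(r/4),
$$
so that $|u_k| \leq C_1 + 2\log 4 + 2\log(1/r)$ on $B(x,3r/4)$. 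Equivalently $e^{|u_k(y)|} \leq C\, r^{-2}$ there.

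Next I would rescale. Define $w_k(z) = u_k(x + r z)$ for $z \in B_{3/4}(0)$. Then $w_k$ solves $\Delta w_k + r^2 h_1^k(x+rz) e^{w_k} - r^2 h_2^k(x+rz) e^{-w_k} = 0$ in $B_{3/4}(0)$, and by the bound just obtained the right-hand side satisfies
$$
\big| r^2 h_1^k e^{w_k} - r^2 h_2^k e^{-w_k} \big| \leq C r^2 \big( e^{|w_k|} \big) = C r^2 \cdot C r^{-2} = C,
$$
using \eqref{1.4}; hence $|\Delta w_k|$ is bounded by a constant independent of $k$ and of $x$ on $B_{3/4}(0)$. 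Now I would apply the standard interior gradient estimate for the Poisson equation (or, equivalently, elliptic $L^p \to W^{2,p} \to C^1$ estimates) on the ball $B_{1/2}(0) \subset B_{3/4}(0)$: since $\Delta w_k \in L^\infty$, one gets a uniform bound on $\nabla w_k$ on $B_{1/2}(0)$ \emph{provided} $w_k$ itself is controlled in $L^1$ or $L^\infty$ on $B_{3/4}(0)$ — and indeed $\|w_k\|_{L^\infty(B_{3/4}(0))} \leq C_1 + C + 2\log(1/r)$ from above. The logarithmic term in $r$ is the subtle point: a naive application would produce a gradient bound that degenerates as $r \to 0$. The clean way around this is to subtract the mean: set $\overline{w}_k = \fint_{B_{3/4}(0)} w_k$ and apply the estimate to $w_k - \overline{w}_k$, which satisfies the same equation with bounded Laplacian; Poincaré then controls $\|w_k - \overline{w}_k\|_{L^1}$ in terms of $\|\nabla w_k\|_{L^1}$, and a bootstrap (or directly the gradient estimate for solutions of $\Delta v = f$ with $f$ bounded, which bounds $\|\nabla v\|_{L^\infty(B_{1/2})}$ by $\|f\|_{L^\infty(B_{3/4})} + \|v\|_{L^1(B_{3/4})}$) gives $|\nabla (w_k - \overline w_k)| = |\nabla w_k| \leq C$ on $B_{1/2}(0)$, with $C$ independent of $k$ and $x$. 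Integrating along segments inside $B_{1/2}(0)$ yields $|w_k(z_1) - w_k(z_2)| \leq C$ for $z_1, z_2 \in B_{1/2}(0)$, which is exactly $|u_k(x_1) - u_k(x_2)| \leq C$ for $x_1, x_2 \in B(x, r/2)$.

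I expect the main (and only real) obstacle to be making the gradient estimate genuinely scale-invariant, i.e., ensuring the constant does not pick up a factor growing like $\log(1/r)$ or $1/r$. This is handled precisely by working with $w_k - \overline{w}_k$ rather than $w_k$: the oscillation of $w_k$ on $B_{3/4}(0)$ is what we are trying to bound, so we must not feed an $L^\infty$ bound on $w_k$ (which carries the bad $\log(1/r)$) into the estimate, but rather only the $L^\infty$ bound on $\Delta w_k$ (which is clean). With that choice everything is uniform, and the proof is complete.
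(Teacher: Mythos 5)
Your reduction to a bounded Laplacian after rescaling is fine: property (4) of Proposition \ref{pr2.1} does give $e^{|u_k|}\le C r^{-2}$ on $B(x,3r/4)$ with $r=d(x,\Sigma_k)$, hence $|\Delta w_k|\le C$ for $w_k(z)=u_k(x+rz)$. The gap is in the last step. Decompose $w_k=p_k+h_k$ with $p_k$ the Newtonian potential of $\Delta w_k$ (so $\|p_k\|_{C^1(B_{3/4})}\le C$) and $h_k$ harmonic in $B_{3/4}$. The only local information available on $h_k$ is $\|h_k\|_{L^\infty(B_{3/4})}\le C+2\log(1/r)$, and a harmonic function with that bound can have oscillation of order $\log(1/r)$ on $B_{1/2}$ (e.g.\ $h_k(z)=\log(1/r)\,z_1$); no interior estimate improves this without further input. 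Your proposed fix is circular: the estimate $\|\nabla v\|_{L^\infty(B_{1/2})}\le C\bigl(\|\Delta v\|_{L^\infty(B_{3/4})}+\|v\|_{L^1(B_{3/4})}\bigr)$ applied to $v=w_k-\overline{w}_k$ requires a bound on $\|w_k-\overline{w}_k\|_{L^1(B_{3/4})}$, which by Poincar\'e you propose to obtain from $\|\nabla w_k\|_{L^1(B_{3/4})}$ --- but that is, up to constants and on comparable balls, exactly the oscillation you are trying to bound; there is no independent control of it and no smallness that would make a ``bootstrap'' close. The underlying issue is that the lemma is not a purely local statement: your argument never uses the bounded oscillation of $u_k$ on $\partial B_1$ or the energy bounds in \eqref{1.5}, and without them the conclusion fails for a general function with $|\Delta w_k|\le C$.

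The paper supplies the missing global input by writing $u_k(x_1)-u_k(x_2)$ via the Green representation on all of $B_1$: the boundary (harmonic) part is $O(1)$ by the oscillation hypothesis on $\partial B_1$; the contribution of $\eta\notin B_{3r/4}(x)$ is $O(1)$ because $\log\bigl(|x_1-\eta|/|x_2-\eta|\bigr)=O(1)$ there and the total mass $\int_{B_1}(h_1^ke^{u_k}+h_2^ke^{-u_k})$ is bounded; and the near region is handled by your same rescaling together with the decay $e^{v_i^k(\eta)}\le C|\eta-e_1|^{-2}$ coming from Proposition \ref{pr2.1}(4), which keeps the logarithmic kernel integrable. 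If you want to salvage your scheme, you would first have to prove $r^{-1}\int_{B(x,3r/4)}|\nabla u_k|\le C$ (equivalently $\|\nabla w_k\|_{L^1(B_{3/4})}\le C$), and the only available route to that bound is again the global Green representation plus the energy bound --- at which point you have reproduced the paper's proof.
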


\begin{proof}
Using the Green's representation formula it is not difficult to prove that the oscillation of $u_k$ on $B_1\setminus B_{\frac{1}{10}}$ is finite. Hence we can assume $|x_i|\leq\frac{1}{10}, i=1,2$. Let
\begin{align*}
G(x,\eta)=-\frac{1}{2\pi}\log|x-\eta|+H(x,\eta)
\end{align*}
be the Green's function on $B_1$ with respect to Dirichlet boundary condition. Let $x_0\in B_1\setminus\Sigma_k$ and $x_1,x_2\in B(x_0,d(x_0,\Sigma_k)/2)$. By using the fact $u_k$ has bounded oscillation on $\partial B_1,$ We have
$$
	u_k(x_1) - u_k(x_2) = \int_{B_1} \Big(G(x_1,\eta)-G(x_2,\eta)\Big)
\Big({h}_1^k(\eta)e^{u_k(\eta)}-{h}_2^k(\eta)e^{-u_k(\eta)}\Big)\mathrm{d}\eta + O(1).
$$
Since $|x_i| \leq \frac{1}{10}, i=1,2$ and $\Delta H = 0$ in $B_1$, we can use the bound on the energy \eqref{1.5} to get
$$
\int_{B_1} \Big(H(x_1,\eta)-H(x_2,\eta)\Big) \Big({h}_1^k(\eta)e^{u_k(\eta)}-{h}_2^k(\eta)e^{-u_k(\eta)}\Big)\mathrm{d}\eta = O(1).
$$
Therefore, we are left with proving
$$
	\int_{B_1} \log \frac{|x_1-\eta|}{|x_2-\eta|}\Big({h}_1^k(\eta)e^{u_k(\eta)}-{h}_2^k(\eta)e^{-u_k(\eta)}\Big)\mathrm{d}\eta = O(1).
$$
Let $r_k$ be the distance between $x_0$ and $\Sigma_k$. We distinguish between two cases. Suppose first $\eta\in B_1 \setminus B_{\frac 34 r_k}(x_0)$. Then
$$
\log \frac{|x_1-\eta|}{|x_2-\eta|} = O(1)
$$
and the integration in this region is bounded.

Consider now $\eta \in B_{\frac 34 r_k}(x_0)$ and let
\begin{align*}
&v_1^k(y)=u_k(x_0+r_k y)+2\log r_k,\\
&v_2^k(y)=-u_k(x_0+r_k y)+2\log r_k,
\end{align*}
for $y\in B_{3/4}$. Letting $y_1, y_2$ be the images of $x_1, x_2$ after scaling, namely $x_i = x_0 + r_k y_i, i=1,2$, we have to prove that
$$
	\int_{B_{3/4}} \log \frac{|y_1-\eta|}{|y_2-\eta|}\Big({h}_1^k(x_0 + r_k\eta)e^{v_1^k(\eta)}-{h}_2^k(x_0 + r_k\eta)e^{v_2^k(\eta)}\Big)\mathrm{d}\eta = O(1).
$$
Without loss of generality we may assume that $e_1=(1,0)$ is the image after scaling of the blow-up point in $\Sigma_k$ closest to $x_0$. By Proposition \ref{pr2.1} it holds
\begin{align*}
{v}_i^k(\eta)+2\log|\eta-e_1|\leq C.
\end{align*}
Therefore
\begin{align*}
e^{{v}_i^k(\eta)}\leq C|\eta-e_1|^{-2}.
\end{align*}
Moreover, we notice that $|\eta-e_1|\geq C>0$ for $\eta\in B_{\frac34}.$ Then for $i,j=1,2$, we get
\begin{align*}
\int_{B_{\frac34}}\log|y_j-\eta|{h}_i^k(x_0 + r_k\eta) e^{v_i^k(\eta)}\mathrm{d}\eta\leq
C\int_{B_{\frac34}}\frac{\log|y_j-\eta|}{|\eta-e_1|^2}\mathrm{d}\eta\leq C
\end{align*}
and we are done.
\end{proof}

\vspace{1cm}
\section{Pohozaev identity and related estimates on the energy} \label{s:pohozaev}

\medskip

We establish here a Pohozaev-type identity for the class of equations we are considering. The latter will be a crucial tool in proving the quantization result of Theorem \ref{th}.

We start with some observations and terminology. By Lemma \ref{le2.1} one can see that the behavior of blowup solutions away from the bubbling area can be described just by its spherical average in a neighborhood of a point in $\Sigma_k$. Moreover, the behavior of the solution on a boundary of a ball, say $\partial B_{r}(x_0)$, will play a crucial role in the forthcoming arguments, see for example Remark \ref{rem-fast}. Throughout the paper we will say $u_k$ has fast decay on $\partial B_{r}(x_0)$ if
$$
u_k(x) \leq -2\log|x| - N_k, \qquad \mbox{for } x\in\partial B_{r}(x_0),
$$
for some $N_k\to+\infty$. If instead there exists $C>0$ independent of $k$ such that
$$
u_k(x) \geq -2\log|x| - C, \qquad \mbox{for } x\in\partial B_{r}(x_0),
$$
we say $u_k$ has slow decay on $\partial B_{r}(x_0)$. The same terminology will be used for $-u_k$.

\

For a sequence of bubbling solutions $u_k$ of (\ref{1.2}) recall the definition of local blow-up masses given in \eqref{1.6}. The main result is the following.
\begin{proposition}
\label{pr3.1}
Let $u_k$ satisfy (\ref{1.2}), (\ref{1.3}), (\ref{1.5}) and $h_i^k$ satisfy (\ref{1.4}). Then we have
\begin{align*}
4(\sigma_1+\sigma_2)=(\sigma_1-\sigma_2)^2.
\end{align*}
\end{proposition}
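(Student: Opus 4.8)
The plan is to derive the identity $4(\sigma_1+\sigma_2)=(\sigma_1-\sigma_2)^2$ from a Pohozaev identity applied on a shrinking family of balls $B_\delta$ centered at the blow-up point $0$, then let first $k\to\infty$ and afterwards $\delta\to0$. First I would multiply equation \eqref{1.2} by $x\cdot\nabla u_k$ and integrate over $B_\delta$. The two nonlinear terms $h_1^ke^{u_k}$ and $h_2^ke^{-u_k}$ produce, after integrating by parts, bulk terms of the form $2\int_{B_\delta}(h_1^ke^{u_k}+h_2^ke^{-u_k})\,\mathrm{d}x$ together with error terms involving $x\cdot\nabla h_i^k$, which are $O(\delta)\cdot O(1)$ by the $C^3$ bound \eqref{1.4} and the energy bound \eqref{1.5}, hence negligible in the limit; the boundary contributions on $\partial B_\delta$ are $\delta\int_{\partial B_\delta}\big(h_1^ke^{u_k}-h_2^ke^{-u_k}\big)\,\mathrm{d}s$ plus the usual quadratic-gradient boundary term $\delta\int_{\partial B_\delta}\big(\tfrac12|\partial_\tau u_k|^2-\tfrac12|\partial_\nu u_k|^2\big)\mathrm{d}s$. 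So the raw identity reads, schematically,
\begin{align*}
2\int_{B_\delta}\!\big(h_1^ke^{u_k}+h_2^ke^{-u_k}\big) = \delta\!\int_{\partial B_\delta}\!\big(h_1^ke^{u_k}-h_2^ke^{-u_k}\big)\mathrm{d}s + \delta\!\int_{\partial B_\delta}\!\Big(\tfrac12|\partial_\tau u_k|^2-\tfrac12|\partial_\nu u_k|^2\Big)\mathrm{d}s + o_k(1)+o_\delta(1).
\end{align*}

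The left-hand side, after dividing by $2\pi$ and taking the iterated limit, is exactly $\sigma_1+\sigma_2$ times $2$. The heart of the argument is to evaluate the right-hand side. Here I would use Lemma \ref{le2.1}, which says $u_k$ has bounded oscillation on each sphere $\partial B_\delta$ away from $\Sigma_k$; consequently, on a good sphere $\partial B_\delta$ (chosen by a pigeonhole/mean-value argument so that $\partial_\nu u_k$ is controlled, in the spirit of the Pohozaev arguments in \cite{li} or \cite{bm}), $u_k$ is essentially radial up to $O(1)$, so $\partial_\tau u_k$ is negligible and $r\,\partial_\nu u_k\to -2(\sigma_1-\sigma_2)$ as $k\to\infty$ by integrating \eqref{1.2} over $B_\delta$ (divergence theorem) and using the definitions \eqref{1.6}: indeed $\int_{\partial B_\delta}\partial_\nu u_k = -\int_{B_\delta}(h_1^ke^{u_k}-h_2^ke^{-u_k})\to -2\pi(\sigma_1-\sigma_2)$. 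This makes the quadratic boundary term contribute $-\tfrac12\cdot 2\pi\cdot\big(2(\sigma_2-\sigma_1)\big)^2/(2\pi)^2\cdot 2\pi$, i.e. after the normalization by $2\pi$ it yields $-(\sigma_1-\sigma_2)^2$ with the correct sign (the curvature term on a circle giving the extra factor). Meanwhile the term $\delta\int_{\partial B_\delta}(h_1^ke^{u_k}-h_2^ke^{-u_k})\mathrm{d}s$ measures the flux through the annular shell and, because all the mass concentrates at $0$, it tends to $0$ in the iterated limit (this is where the fast/slow decay dichotomy in the paragraph preceding the Proposition is used: on a good sphere the solution has fast decay for the dominant component so the surface integral of the nonlinearity is $o_\delta(1)$).

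Assembling the pieces and normalizing by $2\pi$, the identity collapses to $2(\sigma_1+\sigma_2) = -\big(-(\sigma_1-\sigma_2)^2\big)\cdot\tfrac12\cdot 2 $, which after bookkeeping the constants gives precisely $4(\sigma_1+\sigma_2)=(\sigma_1-\sigma_2)^2$; I would be careful to track the factor coming from $\int_{\partial B_1}(\partial_\tau u)^2 - (\partial_\nu u)^2$ on the unit circle, where for a pure logarithm $-\beta\log|x|$ one has $r\partial_\nu u = -\beta$ and the boundary term equals $-\tfrac12\beta^2$, and $\beta = 2(\sigma_1-\sigma_2)$ here. The main obstacle I anticipate is \emph{justifying that the sphere $\partial B_\delta$ can be chosen "good"} uniformly — i.e. that the tangential-derivative term and the surface integral of the nonlinear terms are genuinely negligible — which requires combining the selection process of Proposition \ref{pr2.1} (to know $\Sigma_k$ shrinks to $0$ and the energy outside small disks is controlled) with Lemma \ref{le2.1} (to upgrade pointwise oscillation control on spheres to derivative control via interior elliptic estimates applied to the rescaled equation on annuli where $u_k$ stays bounded). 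Once that harmonic-analysis-flavored bookkeeping on good spheres is in place, the identity is a direct computation.
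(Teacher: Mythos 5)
Your overall strategy coincides with the paper's: a Pohozaev identity on a small ball, with the bulk term giving $2\cdot 2\pi(\sigma_1+\sigma_2)$ and the quadratic boundary term giving $\pi(\sigma_1-\sigma_2)^2$ via a radial expansion of $\nabla u_k$. However, the step you yourself flag as "the main obstacle" is precisely the real content of the paper's argument, and your sketch of it contains a gap and a misstatement. The surface integral $\delta\int_{\partial B_\delta}\bigl(h_1^ke^{u_k}+h_2^ke^{-u_k}\bigr)$ (note: the Pohozaev boundary term carries the \emph{sum}, not the difference) is only $O(1)$ on a generic sphere, since Proposition \ref{pr2.1}(4) gives merely $e^{|u_k|}\leq C\,\mathrm{dist}(x,\Sigma_k)^{-2}$; to make it $o(1)$ one needs \emph{both} $u_k$ and $-u_k$ to have fast decay on the chosen sphere, not just "the dominant component" as you write. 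If one component has only slow decay there, its surface integral contributes an $O(1)$ error that destroys the identity. The existence of radii $l_k\to 0$ containing $\Sigma_k$ on which both components decay fast is exactly Lemma \ref{le3.1} of the paper, proved by a two-step energy/pigeonhole argument over dyadic annuli (if $u_k+2\log|x|\geq -C$ on a long range of scales then $\int h_1^ke^{u_k}$ blows up, and likewise for $-u_k$ within a large range around such a radius). This selection is $k$-dependent, so your framework of a \emph{fixed} $\delta$ with iterated limits does not directly work; the paper instead rescales by $l_k$ and evaluates the identity on $\partial B_{\sqrt{R_k}}$ in the rescaled variable.

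Two further points. First, your control of the gradient is underjustified: bounded oscillation on spheres (Lemma \ref{le2.1}) gives no derivative information, and the paper obtains the expansion $\nabla v_1^k(y)=-\tfrac{y}{|y|^2}\bigl(\sigma_1-\sigma_2+o(1)\bigr)+o(|y|^{-1})$ by a careful decomposition of the Green representation integral into five regions, not by a mean-value choice of good sphere; your divergence-theorem computation only controls the \emph{average} of $\partial_\nu u_k$, and passing from the average to a pointwise statement (and killing $\partial_\tau u_k$) again requires the Green's function analysis. Second, your constants do not close: from $\int_{\partial B_\delta}\partial_\nu u_k\to -2\pi(\sigma_1-\sigma_2)$ one gets $r\,\partial_\nu u_k\to -(\sigma_1-\sigma_2)$, not $-2(\sigma_1-\sigma_2)$, and with your value the final identity comes out as $2(\sigma_1+\sigma_2)=(\sigma_1-\sigma_2)^2$, off by a factor of $2$. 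With the correct coefficient the bookkeeping does give $4(\sigma_1+\sigma_2)=(\sigma_1-\sigma_2)^2$, but as written the assembly is inconsistent.
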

Before we give a proof of Proposition \ref{pr3.1}, we first establish the following auxiliary lemma.
\begin{lemma}
\label{le3.1}
For all $\e_k\rightarrow0$ such that $\Sigma_k\subset B_{\e_k/2}(0)$, there exists $l_k\rightarrow0$ such that $l_k\geq2\e_k$ and
\begin{align}
\label{3.2}
|\overline{u}_k(l_k)|+2\log l_k\rightarrow-\infty,
\end{align}
where $\overline{u}_k(r):=\frac{1}{2\pi r}\int_{\partial B_r}u_k$.
\end{lemma}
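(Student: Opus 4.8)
The statement asserts that we can find radii $l_k \to 0$ with $l_k \ge 2\e_k$ on whose boundary circles $u_k$ has (averaged) fast decay, i.e.\ $\overline u_k(l_k) + 2\log l_k \to -\infty$. The plan is to argue by contradiction: suppose that for \emph{every} choice of $l_k \to 0$ with $l_k \ge 2\e_k$ one has $\overline u_k(l_k) + 2\log l_k \ge -C_0$ along a subsequence, for some fixed $C_0$. Then on the whole dyadic range $[2\e_k, \,\rho]$ (for every fixed small $\rho$) the spherical average satisfies $\overline u_k(r) \ge -2\log r - C_0$. By the oscillation control of Lemma \ref{le2.1} (applied on the annular region outside $\Sigma_k \subset B_{\e_k/2}$, where $d(x,\Sigma_k)\sim |x|$) this pointwise lower bound on $\overline u_k$ transfers to a pointwise lower bound $u_k(x) \ge -2\log|x| - C_1$ on the annulus $2\e_k \le |x| \le \rho$. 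I would then estimate the contribution of $h_1^k e^{u_k}$ from below on that annulus: $\int_{2\e_k \le |x|\le \rho} h_1^k e^{u_k}\, \ge c\int_{2\e_k}^{\rho} r^{-2}\cdot r\, dr = c\log(\rho/2\e_k) \to +\infty$, contradicting the uniform energy bound \eqref{1.5}. Hence there must exist some scale $l_k$ in the range on which the average has fast decay.

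The slightly delicate point is to turn "it is not true that all $l_k$ have slow decay" into "there exists a single sequence $l_k$ with $\overline u_k(l_k)+2\log l_k \to -\infty$" — a priori one only gets, for each fixed large $N$, some $l_k = l_k(N)$ with $\overline u_k(l_k) + 2\log l_k \le -N$ for $k$ large. This is handled by a standard diagonal extraction: choosing $N = N_j \to \infty$ and taking for each $j$ the tail of indices $k$ on which $l_k(N_j)$ works, then diagonalizing, produces a single sequence $l_k \to 0$ (one can also insist $l_k \ge 2\e_k$ throughout, and $l_k \to 0$ because the contradiction argument can be localized to $[2\e_k, \rho_k]$ for any $\rho_k \to 0$ decaying slowly enough that $\log(\rho_k/\e_k)\to\infty$). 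One should also record that $l_k \to 0$ is automatic: the argument shows the slow-decay alternative fails on $[2\e_k,\rho]$ for \emph{every} fixed $\rho$, so the witnessing radius can be taken below any prescribed threshold.

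I expect the main obstacle to be the transfer step: justifying that $\overline u_k(r) \ge -2\log r - C$ on a range of radii implies the pointwise bound $u_k(x) \ge -2\log|x| - C'$ on the corresponding annulus, uniformly in $k$. This needs Lemma \ref{le2.1} applied on balls $B(x, d(x,\Sigma_k)/2)$ together with the geometric fact that for $x$ with $|x| \ge 2\e_k$ and $\Sigma_k \subset B_{\e_k/2}$ one has $d(x,\Sigma_k) \sim |x|$, so the oscillation of $u_k$ over the annulus $\{|x| \sim r\}$ is bounded by a $k$-independent constant — hence $u_k(x)$ is comparable to $\overline u_k(r)$ there. With that in hand the energy lower bound and the resulting contradiction are routine. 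A cleaner alternative, avoiding the contradiction setup entirely, is to argue directly: let $f_k(t) := \overline u_k(e^{t}) + 2t$ for $t \in [\log(2\e_k), \log\rho]$; if $\liminf_k \sup_t f_k(t) = +\infty$ were false we would get, as above, $\int h_1^k e^{u_k} \gtrsim$ the length of the $t$-interval $\to \infty$, a contradiction; so $f_k$ attains large negative values and, choosing $t_k$ where, say, $f_k(t_k) \le -N_k$ with $N_k \to \infty$ slowly, we set $l_k = e^{t_k}$. Either route gives \eqref{3.2}.
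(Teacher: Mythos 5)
There is a genuine gap: you have dropped the absolute value in \eqref{3.2}. The lemma asserts $|\overline{u}_k(l_k)|+2\log l_k\to-\infty$, i.e.\ that \emph{both} $\overline{u}_k(l_k)+2\log l_k\to-\infty$ and $-\overline{u}_k(l_k)+2\log l_k\to-\infty$ hold at the \emph{same} radius $l_k$ — this simultaneous fast decay of $u_k$ and $-u_k$ on a common circle is the entire point of the lemma (it is what allows both boundary conditions $\overline{v}_i^k(1)\to-\infty$, $i=1,2$, in \eqref{3.5} and the evaluation of the Pohozaev identity). Your contradiction argument, which integrates only $h_1^k e^{u_k}$ against the first energy bound in \eqref{1.5}, produces radii where $u_k$ decays fast; the symmetric argument produces radii where $-u_k$ decays fast; but a priori these are different radii, and nothing in your proposal forces them to coincide. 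This is precisely the difficulty the paper's proof is organized around: it first finds (by your type of energy argument, applied to each component separately) a radius $\tilde r_k$ where $u_k$ decays fast, then uses Lemma \ref{le2.1} to propagate that fast decay over a whole annulus $[\hat r_k,\tilde r_k]$ or $[\tilde r_k,s_k]$ whose radius ratio tends to infinity, and only then runs the energy argument for $-u_k$ \emph{inside that annulus} to locate a single $r_k$ where both decay fast.

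Your framework can be repaired without adopting the paper's two-stage structure: negating the statement with the absolute value gives $|\overline{u}_k(r)|+2\log r\geq -C_0$ for all $r\in[2\e_k,\rho_k]$, hence by Jensen's inequality
\begin{equation*}
\frac{1}{2\pi r}\int_{\partial B_r}\bigl(e^{u_k}+e^{-u_k}\bigr)\;\geq\; e^{\overline{u}_k(r)}+e^{-\overline{u}_k(r)}\;\geq\; e^{|\overline{u}_k(r)|}\;\geq\; e^{-C_0}r^{-2},
\end{equation*}
and integrating over the annulus contradicts the \emph{sum} of the two energy bounds in \eqref{1.5} once $\log(\rho_k/\e_k)\to\infty$. (Note also that for the lower bound on the integral you do not need the oscillation transfer via Lemma \ref{le2.1} that you flag as the main obstacle — Jensen on each circle suffices; the oscillation lemma is only needed if you insist on a pointwise bound.) As written, however, your proposal establishes only the one-sided statement, which is strictly weaker than Lemma \ref{le3.1} and insufficient for its use in Proposition \ref{pr3.1}.
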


\begin{proof}
Given $\e_{k,1}\geq\e_k$ such that $\e_{k,1} \to 0$, there exist $r_{k,1},r_{k,2}\geq\e_{k,1}$ with the following property:
$$
{u}_k(x)+2\log r_{k,1}\rightarrow-\infty, \qquad \forall x\in\partial B_{r_{k,1}},
$$
\begin{equation} \label{3.4}
-{u_k}(x)+2\log r_{k,2}\rightarrow-\infty, \qquad \forall x\in\partial B_{r_{k,2}}.
\end{equation}
Let us focus for example on $u_k$. If the above property is not satisfied, we have some $\e_{k,1}\rightarrow0$ with $\e_{k,1}\geq\e_k$ such that for all $r\geq\e_{k,1}$,
\begin{align*}
\sup_{x\in\partial B_r} \bigr( {u_k}(x)+2\log|x| \bigr) \geq-C,
\end{align*}
for some $C>0.$ But $u_k(x)$ has bounded oscillation on each $\partial B_r$ by Lemma \ref{le2.1}. It follows that
\begin{align*}
u_k(x)+2\log|x|\geq-C
\end{align*}
for some $C$ and all $x\in\partial B_r,~r\geq\e_{k,1}$. This means that
\begin{align*}
e^{u_k(x)}\geq C|x|^{-2}, \qquad \e_{k,1}\leq|x|\leq1.
\end{align*}
Integrating $e^{u_{k}}$ on $B_1\setminus B_{\e_{k,1}}$ we get a contradiction on the uniform energy bound of $\int_{B_1}h_1^ke^{u_k}$ given by \eqref{1.5}. This proves (\ref{3.4}).

\

We start now by taking $\tilde{r}_k\geq\e_k$ so that
\begin{align*}
\overline{u}_k(\tilde{r}_k)+2\log\tilde{r}_k\rightarrow-\infty.
\end{align*}
Suppose $\tilde{r}_k$ is not tending to $0$. Then by Lemma \ref{le2.1} there exists $\hat{r}_k\rightarrow0$ such that
\begin{align} \label{es}
\overline{u}_k(r)+2\log r\rightarrow-\infty, \qquad \mbox{for }\hat{r}_k\leq r\leq\tilde{r}_k.
\end{align}
To prove this we observe that
\begin{align*}
u_k(x)+2\log|x|\leq-N_k, \qquad |x|=\tilde{r}_k
\end{align*}
for some $N_k\rightarrow\infty$. Then, for any fixed $C$, by Lemma \ref{le2.1} we obtain
\begin{align*}
u_k(x)+2\log|x|\leq-N_k+C_0, \qquad \tilde{r}_k/C<|x|<\tilde{r}_k,\\
\end{align*}
Therefore, it is not difficult to prove that $\hat{r}_k$ can be found so that $\frac{\hat{r}_k}{\tilde{r}_k}\rightarrow0$ and (\ref{es}) holds.

Suppose now $\tilde{r}_k\rightarrow0$. Similarly as before we can exploit Lemma \ref{2.1} to find $s_k>\tilde{r}_k$ with $s_k\rightarrow0$ and $\frac{s_k}{\tilde{r}_k}\rightarrow\infty$ such that
\begin{align*}
\overline{u}_k(r)+2\log r\rightarrow-\infty, \qquad \mbox{for } \tilde{r}_k\leq r\leq s_k.
\end{align*}
In both two alternatives we can find $r_k$ with $r_k \in [\hat{r}_k, \tilde{r}_k]$ in the first case, or $r_k \in [\tilde{r}_k, s_k]$ in the second case, such that
\begin{align*}
-\overline{u}_k(r_k)+2\log r_k\rightarrow-\infty.
\end{align*}
In fact, otherwise we would have
\begin{align*}
-\overline{u}_k(r)+2\log r\geq-C, \qquad \mathrm{for}~\hat{r}_k\leq r\leq\tilde{r}_k \quad \mathrm{or} \quad \tilde{r}_k\leq r\leq s_k.
\end{align*}
By the same reason, since by construction $\tilde{r}_k/\hat{r}_k\rightarrow\infty$ or $s_k/\tilde{r}_k\rightarrow\infty$ in each case we get a contradiction to the uniform bound on the energy \eqref{1.5}. Lemma \ref{le3.1} is proved.
\end{proof}
\medskip

\noindent{\em Proof of Proposition \ref{pr3.1}.} We start by observing that there exists $l_k\rightarrow0$ such that $\Sigma_k\subset B_{l_k/2}(0)$, (\ref{3.2}) holds and
\begin{align} \label{st}
\begin{split}
&\frac{1}{2\pi}\int_{B_{l_k}}h_1^ke^{u_k}=\sigma_1+o(1), \\
&\frac{1}{2\pi}\int_{B_{l_k}}h_2^ke^{-u_k}=\sigma_2+o(1).
\end{split}
\end{align}
In fact, one can first choose $l_k$ so that the property \eqref{st} is satisfied and then by Lemma \ref{le3.1} we can further assume that \eqref{3.2} holds true.
Let
\begin{align*}
&v_1^k(y)=u_k(l_ky)+2\log l_k,\\
&v_2^k(y)=-u_k(l_ky)+2\log l_k,
\end{align*}
which satisfy
\begin{align}
\label{3.5}
\begin{split}
\left\{ \begin{array}{l}
\Delta v_1^k(y)+H_1^k(y)\,e^{v_1^k}-H_2^k(y)\,e^{v_2^k}=0, \quad |y|\leq1/l_k,\vspace{0.2cm}\\
\overline{v}_i(1)^k\rightarrow-\infty, \quad i=1,2,
\end{array} \right.
\end{split}
\end{align}
where
$$
H_i^k(y)=h_i^k(l_ky), \qquad i=1,2.
$$
A modification of the Pohozaev-type identity gives us
\begin{align}
\label{3.6}
\begin{split}
&\sum_{i=1}^2\int_{B_{\sqrt{R_k}}}(y\cdot\nabla H_i^k)\,e^{v_i^k}+2\sum_{i=1}^2\int_{B_{\sqrt{R_k}}}H_i^ke^{v_i^k} = \\
=&\sqrt{R_k}\int_{\partial B_{\sqrt{R_k}}}\sum_{i=1}^2H_i^ke^{v_i^k}
+\sqrt{R_k}\int_{\partial B_{\sqrt{R_k}}}\left(|\partial_{\nu}v_1^k|^2- \frac 12 |\nabla v_1^k|^2\right),
\end{split}
\end{align}
where we used $\nabla v_1^k=-\nabla v_2^k$ and $R_k\rightarrow\infty$ will be chosen later. We rewrite the above formula as
\begin{align*}
\mathcal{L}_1+\mathcal{L}_2=\mathcal{R}_1+\mathcal{R}_2+\mathcal{R}_3,
\end{align*}
where the notation is easily understood. First we choose $R_k\rightarrow\infty$ sufficiently smaller than $l_k^{-1}$ so that $\mathcal{L}_1=o(1)$ by $l_k\rightarrow0$. Now we consider $\mathcal{L}_2$. Observe that by Lemma \ref{le2.1}, $v_i^k(y)\rightarrow-\infty$ over all compact subsets of $\mathbb{R}^2\setminus B_{1/2}$. Thus we can choose $R_k$ so that
\begin{align}
\label{3.7}
\int_{B_{R_k}\setminus B_{1}}H_i^ke^{v_i^k}=o(1).
\end{align}
Moreover, by the choice of $l_k$ we have
\begin{align} \label{qua}
\begin{split}
&\frac{1}{2\pi}\int_{B_1}H_1^ke^{v_1^k}=\frac{1}{2\pi}\int_{B_{l_k}}h_1^ke^{u_k}=\sigma_1+o(1), \\
&\frac{1}{2\pi}\int_{B_1}H_2^ke^{v_2^k}=\frac{1}{2\pi}\int_{B_{l_k}}h_2^ke^{-u_k}=\sigma_1+o(1).
\end{split}
\end{align}
Therefore, by (\ref{3.7}) we obtain
\begin{align*}
\mathcal{L}_2=4\pi\sum_{i=1}^2\sigma_i+o(1).
\end{align*}
To estimate $\mathcal{R}_1$ we notice that by (\ref{3.5}) and Lemma \ref{le2.1}
\begin{align}
\label{3.8}
v_i^k(y)+2\log|y|\rightarrow-\infty, \qquad \mathrm{uniformly~in}~1<|y|\leq \sqrt{R_k}.
\end{align}
It follows that $\mathcal{R}_1=o(1).$

\

\noindent Next, we shall estimate the terms $\mathcal{R}_2$ and $\mathcal{R}_3$. To do this we have to estimate $\nabla v_i^k$ on $\partial B_{\sqrt{R_k}}$. Let
\begin{align*}
G_k(y,\eta)=-\frac{1}{2\pi}\log|y-\eta|+H_k(y,\eta)
\end{align*}
be the Green's function on $B_{l_k^{-1}}$ with respect to Dirichlet boundary condition. The regular part is expressed as
\begin{align*}
H_k(y,\eta)=\frac{1}{2\pi}\log\frac{|y|}{l_k^{-1}}\left|\frac{l_k^{-2}y}{|y|^2}-\eta\right|
\end{align*}
and it holds
\begin{align}
\label{3.9}
\nabla_yH_k(y,\eta)=O(l_k), \qquad \mbox{for } y\in\partial B_{\sqrt{R_k}},~\eta\in B_{l_k^{-1}}.
\end{align}
We start by estimating $\nabla v_1^k$ on $\partial B_{\sqrt{R_k}}$. By the Green's representation formula,
\begin{align*}
v_1^k(y)=\int_{B_{l_k^{-1}}}G(y,\eta)\Big(H_1^ke^{v_1^k}-H_2^ke^{v_2^k}\Big)+F_k,
\end{align*}
where $F_k$, which is the boundary term, is a harmonic function satisfying $F_k=v_i^k$ on $\partial B_{l_k^{-1}}$. In particular $F_k$ has bounded oscillation on $\partial B_{l_k^{-1}}$. It follows that $F_k-C_k=O(1)$ for some $C_k,$ which yields $|\nabla F_k(y)|=O(l_k).$
\begin{align}
\begin{split}
\label{3.10}
\nabla v_1^k(y)=&\int_{B_{l_k^{-1}}}\nabla_yG(y,\eta)\Big(H_1^ke^{v_1^k}-H_2^ke^{v_2^k}\Big)\mathrm{d}\eta+\nabla F_k(y)\\
=&-\frac{1}{2\pi}\int_{B_{l_k^{-1}}}\frac{y-\eta}{|y-\eta|^2}\Big(H_1^ke^{v_1^k}-H_2^ke^{v_2^k}\Big)\mathrm{d}\eta+O(l_k).
\end{split}
\end{align}
In order to estimate the integral of (\ref{3.10}) we divide the domain into few regions. We first observe that
$$
\frac{1}{|y-\eta|}\sim\frac{1}{|\eta|}\leq o\left(R_k^{-\frac12}\right), \qquad \mbox{for } y\in\partial B_{\sqrt{R_k}}, \quad \eta\in B_{l_k^{-1}}\setminus B_{R_k^{2/3}} .
$$
Hence, using the bound of the energy \eqref{1.5}, the integral over $B_{l_k^{-1}}\setminus B_{R_k^{2/3}}$ is $o(1)R_k^{-\frac12}$.

Consider now the integral over $B_1$: we have
\begin{align*}
\frac{y-\eta}{|y-\eta|^2}=\frac{y}{|y|^2}+O\left(\frac{1}{|y|^2}\right), \qquad \mbox{for } y\in\partial B_{\sqrt{R_k}}, \quad \eta\in B_1,
\end{align*}
which, recalling \eqref{qua}, yields
\begin{align*}
-\frac{1}{2\pi}\int_{B_1}\frac{y-\eta}{|y-\eta|^2}\Big(H_1^ke^{v_1^k}-H_2^ke^{v_2^k}\Big)
=\left(-\frac{y}{|y|^2}+O\left(\frac{1}{|y|^2}\right)\right)\Big(\sigma_1-\sigma_2+o(1)\Big).
\end{align*}
As we will see this will be the leading term.

For the integral over the region $B_{\sqrt{R_k}/2}\setminus B_1$ we observe that
$$
\frac{1}{|y-\eta|}\sim\frac{1}{|y|}, \qquad \mbox{for } y\in\partial B_{\sqrt{R_k}}, \quad \eta\in B_{\sqrt{R_k}/2}\setminus B_1.
$$
By the latter estimate and by (\ref{3.7}) we get
\begin{align*}
\int_{B_{\sqrt{R_k}/2}\setminus B_1}\frac{y-\eta}{|y-\eta|^2}\Big(H_1^ke^{v_1^k}-H_2^ke^{v_2^k}\Big)=o(1)|y|^{-1}.
\end{align*}
Similarly one gets
\begin{align*}
\int_{B_{R_k^{2/3}}\setminus \bigr(B_1 \cup B_{\frac{|y|}{2}}(y)\bigr)}\frac{y-\eta}{|y-\eta|^2}\Big(H_1^ke^{v_1^k}-H_2^ke^{v_2^k}\Big)=o(1)|y|^{-1}.
\end{align*}
Moreover, for the integral in $B_{\frac{|y|}{2}}(y)$ we use $e^{v_i^k(\eta)}=o(1)|\eta|^{-2}$ to get
\begin{align*}
\int_{B_{\frac{|y|}{2}}(y)}\frac{y-\eta}{|y-\eta|^2}\Big(H_1^ke^{v_1^k}-H_2^ke^{v_2^k}\Big)=o(1)|y|^{-1}.
\end{align*}
Finally, combing all the estimates we deduce
\begin{align*}
\nabla v_1^k(y)=\left(-\frac{y}{|y|^2}\right)\biggr(\sigma_1-\sigma_2+o(1)\biggr)+o\left(|y|^{-1}\right), \qquad \mbox{for } y\in\partial B_{\sqrt{R_k}}.
\end{align*}
Exploiting the latter formula in $\mathcal{R}_2$ and $\mathcal{R}_3$ we get
$$
	\mathcal{R}_2 + \mathcal{R}_3 = \pi(\sigma_1-\sigma_2)^2+o(1).
$$
Therefore, we end up with
\begin{align*}
4(\sigma_1+\sigma_2)=(\sigma_1-\sigma_2)^2+o(1).
\end{align*}
Hence, Proposition \ref{pr3.1} is proved.
\begin{flushright}
$\square$
\end{flushright}

\

\begin{remark} \label{rem-fast}
By the proof of Proposition \ref{pr3.1} one observes the following fact: the fast decay property is crucial in evaluating the Pohozaev identity, more precisely the term $\mathcal{R}_1$. Moreover, letting $\Sigma_k'\subseteq\Sigma_k$ suppose that
$$\mathrm{dist}\bigr(\Sigma_k',\partial B_{l_k}(p_k)\bigr)=o(1)\,\mathrm{dist}\bigr(\Sigma_k\setminus\Sigma_k',\partial B_{l_k}(p_k)\bigr).$$
Suppose moreover both components $u_k, -u_k$ have fast decay on $\partial B_{l_k}(p_k)$, namely
$$
	|u_k(x)| \leq -2\log|x| -N_k, \qquad  \mbox{for } x\in \partial B_{l_k}(p_k),
$$
for some $N_k\to+\infty$. Then, we can evaluate a local Pohozaev identity and get
\begin{align*}
\bigr(\tilde{\sigma}_1^k(l_k)-\tilde{\sigma}_2^k(l_k)\bigr)^2=4\bigr(\tilde{\sigma}_1^k(l_k)+\tilde{\sigma}_2^k(l_k)\bigr)+o(1),
\end{align*}
where
\begin{align*}
&\tilde{\sigma}_1^k(l_k) = \frac{1}{2\pi}\int_{B_{l_k}(p_k)}h_1^k e^{u_k} \\
&\tilde{\sigma}_2^k(l_k) = \frac{1}{2\pi}\int_{B_{l_k}(p_k)}h_2^k e^{-u_k}.
\end{align*}
Observe that if $B_{l_k}(p_k) \cap \Sigma_k=\emptyset$ then $\tilde{\sigma}_i^k(l_k)=o(1), i=1,2$ and the above formula clearly holds.

This fact will be used in the forthcoming arguments.
\end{remark}

\vspace{1cm}
\section{Asymptotic behavior of solutions around each blow-up point} \label{asymptotic}

\medskip

The goal in this section is to get some energy classification in each blow-up area. We will see in the sequel how the fast decaying property of the solutions plays a crucial role in determining the local energy. Once we obtain the classification around each blow-up point, in Section \ref{combination} we combine them together.

By considering suitable translated functions we may assume without loss of generality that $0\in \Sigma_k$ for any $k$. Let $\tau_k=\frac12\mathrm{dist}(0,\Sigma_k\setminus\{0\})$ we consider the energy limits of $h_1^ke^{u_k}$ and $h_2^ke^{-u_k}$ in $B_{\tau_k}$. Define
\begin{align}
\label{4.2}
\begin{split}
&v_1^k=u_k(\delta_ky)+2\log\delta_k, \\
&v_2^k=-u_k(\delta_ky)+2\log\delta_k,
\end{split} \qquad |y|\leq\tau_k/\delta_k,
\end{align}
where $-2\log\delta_k=\max_{x\in B(0,\tau_k)}|u_k|.$ Thus the equation for $v_i^k$ is
\begin{align*}
\Delta v_1^k(y)+h_1^k(\delta_ky)\,e^{v_1^k(y)}-h_2^k(\delta_ky)\,e^{v_2^k(y)}=0, \qquad |y|\leq\tau_k/\delta_k.
\end{align*}
By the definition of the selection process we have $\tau_k/\delta_k\rightarrow\infty$, see Proposition \ref{pr2.1}.

Observe moreover that
\begin{align*}
&\int_{B_{\tau_k}(0)}h_1^k(x)\, e^{u_k(x)}\mathrm{d}x=\int_{B_{\tau_k/\delta_k}(0)}h_1^k(\d_k y)\, e^{v_1^k(y)}\mathrm{d}y, \\
&\int_{B_{\tau_k}(0)}h_2^k(x)\, e^{-u_k(x)}\mathrm{d}x=\int_{B_{\tau_k/\delta_k}(0)}h_2^k(\d_k y)\, e^{v_2^k(y)}\mathrm{d}y.
\end{align*}
Therefore
\begin{align}
\label{4.4}
\int_{B_{\tau_k}(0)}h_1^k(x)e^{u_k(x)}\mathrm{d}x=O(1)e^{\overline{v}_1^k(\partial B_{\tau_k/\delta_k}(0))},~
\int_{B_{\tau_k}(0)}h_1^k(x)e^{-u_k(x)}\mathrm{d}x=O(1)e^{\overline{v}_2^k(\partial B_{\tau_k/\delta_k}(0))},
\end{align}
Define the following local masses:
\begin{align}
\label{local}
\begin{split}
&{\sigma}_1^k(r)=\frac{1}{2\pi}\int_{B_{r}}h_1^ke^{u_k}, \\
&{\sigma}_2^k(r)=\frac{1}{2\pi}\int_{B_{r}}h_2^ke^{-u_k}.
\end{split}
\end{align}
The main result of this section is the following.
\begin{proposition}
\label{pr4.1}
Suppose (\ref{1.2})-(\ref{1.5}) hold for $u_k,$ $h_i^k$ and recall the definition in \eqref{local}. For any $s_k\in(0,\tau_k)$ such that both $u_k, -u_k$ have fast decay on $\partial B_{s_k}$, i.e.
\begin{align} \label{fast}
|u_k(x)|\leq-2\log|x|-N_k, \qquad \mbox{for } |x|=s_k \mbox{ and some } N_k\rightarrow\infty,
\end{align}
we have that $(\sigma_1^k(s_k),\sigma_2^k(s_k))$ is a $o(1)$ perturbation of one of the two following types:
\begin{align*}
\bigr(2m(m+1),2m(m-1)\bigr) \quad \mathrm{or} \quad \bigr(2m(m-1),2m(m+1)\bigr),
\end{align*}
for some $m\in \mathbb{N}$. In particular, they are both multiple of $4 + o(1)$.

On $\partial B_{\tau_k}$, either both $u_k, -u_k$ have fast decay as in \eqref{fast} and the conclusion is as before, or one component has fast decay while the other one is not fast decay. Suppose for example $-u_k$ has not the fast decay property, i.e.
\begin{align*}
-u_k(x)+2\log|x|\geq-C, \qquad \mbox{for } |x|=\tau_k \mbox{ and some } C>0,
\end{align*}
while for $u_k$ it holds
\begin{align*}
u_k(x) \leq-2\log|x|-N_k, \qquad \mbox{for } |x|=s_k \mbox{ and some } N_k\rightarrow\infty.
\end{align*}
Then $\sigma_1^k(\tau_k)\in 4\pi\mathbb{N} + o(1)$.

In particular, in any case at least one of the two components $u_k, -u_k$ has the local energy in $B_{\tau_k}$ multiple of $4 + o(1)$.
\end{proposition}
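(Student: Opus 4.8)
The plan is to carry out a careful blow-up analysis on the rescaled sequence $v_i^k$ defined in \eqref{4.2}, proceeding inductively on the number of "bubbles" absorbed as one moves outward in the radial variable. First I would set up a Harnack-type / radial comparison: by Lemma \ref{le2.1} the functions $v_i^k$ have bounded oscillation on each circle $\partial B_r$ away from the (rescaled) blow-up set, so their behaviour is governed by the spherical averages $\overline{v}_i^k(r)$. Starting from $r=O(1)$, where $v_1^k$ (say) converges to a standard Liouville bubble with mass $8\pi$, i.e. $\sigma_1^k=4+o(1)$ and $\sigma_2^k=o(1)$ locally, I would track how the pair $(\sigma_1^k(r),\sigma_2^k(r))$ evolves as $r$ increases through the dyadic scales separating consecutive bubbles.

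The heart of the argument is the following dichotomy at each radius $r$ where both components have fast decay (so that no energy sits near $\partial B_r$): apply the local Pohozaev identity of Remark \ref{rem-fast} to get $(\sigma_1^k(r)-\sigma_2^k(r))^2 = 4(\sigma_1^k(r)+\sigma_2^k(r)) + o(1)$. Writing $\sigma_i^k(r)$ as near-integers (which one establishes step by step), this Diophantine-type relation forces $(\sigma_1,\sigma_2)$ to lie on the discrete set $\{(2m(m+1),2m(m-1))\} \cup \{(2m(m-1),2m(m+1))\}$: indeed if $\sigma_1-\sigma_2 = 2\mu$ then $\sigma_1+\sigma_2 = \mu^2$, and combined with the parity/sign constraints coming from which component is "dominant" at that scale one recovers $\sigma_1 = \mu(\mu+1)$-type values after the factor $2$. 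Between two such "clean" radii, one must show the energy can only increase by adding a single new bubble at a time — this is where the selection process of Proposition \ref{pr2.1}, together with the scaling relations \eqref{4.4} linking the energy to $e^{\overline{v}_i^k(\partial B_r)}$, is used: if $u_k$ has fast decay on $\partial B_r$ then $\sigma_1^k(r)$ has essentially stabilized, and the next increment comes from the other component or from a further-out disk, each contributing a quantized amount consistent with the Pohozaev constraint.

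For the boundary case at $r=\tau_k$, where one component — say $-u_k$ — has only slow decay, one cannot close the Pohozaev identity for the full pair, but the component $u_k$ with fast decay still has a well-defined stabilized mass. Here I would argue that $u_k$, after rescaling, behaves like a solution of a Liouville-type equation with the $e^{-u}$ term playing a lower-order (bounded-coefficient, but sign-definite) role on the region where $u_k$ concentrates, so that $\sigma_1^k(\tau_k) \in 4\pi\mathbb{N} + o(1)$ by the classical quantization \eqref{quant} applied scale-by-scale to the bubbles of $u_k$ contained in $B_{\tau_k}$. The phrase "in particular at least one component has local energy multiple of $4+o(1)$" then follows: in the fully-fast-decay case both do by the $\Sigma$-classification, and in the mixed case the fast-decay component does.

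The main obstacle I expect is the inductive bookkeeping of how $(\sigma_1^k(r),\sigma_2^k(r))$ jumps across scales: one must rule out "resonant" configurations where both components grow simultaneously in an intermediate annulus in a way that would produce non-quantized intermediate values, and one must verify that the $o(1)$ errors in the successive Pohozaev identities do not accumulate over the (bounded, but $k$-independent number of) scales. Controlling this requires showing that at each scale exactly one of the two alternatives in Remark \ref{rem-fast} holds, and that the "slow decay" alternative can occur for at most one component at the outermost relevant radius; the bounded-oscillation Lemma \ref{le2.1} and the finiteness of the selection in Proposition \ref{pr2.1} are the tools that make the induction terminate, but organizing the case analysis cleanly is the delicate part.
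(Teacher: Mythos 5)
Your overall skeleton --- rescale as in \eqref{4.2}, track the pair $(\sigma_1^k(r),\sigma_2^k(r))$ outward from the first Liouville bubble, and apply the local Pohozaev identity of Remark \ref{rem-fast} at every radius where both components decay fast --- is the same as the paper's, and your observation that the Pohozaev relation $(\sigma_1-\sigma_2)^2=4(\sigma_1+\sigma_2)$ plus near-integrality pins the pair to $\bigl(2m(m\pm1),2m(m\mp1)\bigr)$ is correct. But the engine that makes the bookkeeping work is missing: the identity $\frac{d}{dr}\overline{v}_1^k(r)=\bigl(-\sigma_1^k(\delta_k r)+\sigma_2^k(\delta_k r)\bigr)/r$ (formula \eqref{deriv}). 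Without it, your claim that ``if $u_k$ has fast decay on $\partial B_r$ then $\sigma_1^k(r)$ has essentially stabilized'' is false: fast decay at one radius does not prevent later accumulation. What freezes $\sigma_1^k$ over an annulus is $\sigma_1^k-\sigma_2^k\geq 2+\e$ there; indeed, right after the state $(4,12)$ one has $\sigma_1^k-\sigma_2^k=-8$, so $\overline{v}_1^k$ is \emph{increasing} and $\sigma_1^k$ grows from $4$ to $24$ even though $v_1^k$ decayed fast at the previous clean radius. Likewise the ``quantized increments'' are not obtained from an a priori integrality: the paper's Lemma \ref{le4.1} shows that if a mass tried to change by a small $\e$ while both components still decay fast, the Pohozaev identity evaluated at that intermediate radius would force the mass to equal one of the two roots of the quadratic (e.g.\ $\sigma_2\in\{0,12\}+o(1)$ when $\sigma_1=4+o(1)$), contradicting the $\e$-change; and Lemma \ref{le4.2} supplies the lower bound $\sigma_2\geq 8+\e_0$ after the slow-decay onset that selects the root $12$. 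You name this as ``the delicate part'' but do not supply the mechanism, so the induction does not close as written.

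The second, more serious problem is your treatment of the mixed case on $\partial B_{\tau_k}$. The $e^{-u_k}$ term is \emph{not} lower order on the region where $u_k$ concentrates: $\sigma_2^k$ reaches values $12, 40,\dots$ comparable to $\sigma_1^k$, and it is precisely this mass that reverses the decay of $\overline{v}_1^k$ and feeds the growth of $\sigma_1^k$ in the annular (neck) regions. Applying the Liouville quantization \eqref{quant} ``scale-by-scale to the bubbles of $u_k$ contained in $B_{\tau_k}$'' would give $\sigma_1^k(\tau_k)=4+o(1)$, since $B_{\tau_k}$ contains a single point of $\Sigma_k$ and hence a single bubble of the selection process; but the correct value can be $24+o(1)$, $84+o(1)$, etc., the excess coming from the annuli and not from additional bubbles. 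The correct argument is the one the paper gives: the fast-decaying component satisfies $\frac{d}{dr}(\overline{v}_1^k+2\log r)\leq -\e/r$ up to $\tau_k/\delta_k$, so its mass is frozen at the value it had at the last radius where both components decayed fast, and that value was already pinned to $2m(m\pm1)\in 4\mathbb{N}$ by the inductive Pohozaev step. So the final conclusion you reach is numerically the right one, but the route you propose to it in the mixed case would not survive scrutiny.
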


\begin{proof}
Let $v_i^k$ be defined in (\ref{4.2}). Observe that by construction one of the $v_i^k$'s converges while the other one goes to minus infinity over all compact subsets of $\mathbb{R}^2$ (see the argument in Proposition \ref{pr2.1}), namely we have just a partially blown-up situation. Without loss of generality we assume that $v_1^k$ converges to $v_1$ in $C_{loc}^2(\R^2)$ and $v_2^k$ tends to minus infinity over any compact subset of $\R^2.$ The equation for $v_1$ is
\begin{align*}
\Delta v_1+h_1e^{v_1}=0~\mathrm{in}~\R^2,\qquad\int_{\R^2}h_1e^{v_1}<\infty,
\end{align*}
where $h_1=\lim_{k\rightarrow\infty}h_1^k(0)$. By the classification result of Chen-Li \cite{cl}, we have
\begin{align*}
\int_{\R^2}h_1e^{v_1}=8\pi
\end{align*}
and
\begin{align*}
v_1(y)=-4\log|y|+O(1),\qquad|y|>1.
\end{align*}
Therefore, we can take $R_k\rightarrow\infty$ (we assume $R_k=o(1)\tau_k/\delta_k$) such that
\begin{align} \label{m=4}
\frac{1}{2\pi}\int_{B_{R_k}}h_1^k(\delta_ky)\,e^{v_1^k}=4+o(1),
\end{align}
and
\begin{align} \label{m=0}
\frac{1}{2\pi}\int_{B_{R_k}}h_2^k(\delta_ky)\,e^{v_2^k}=o(1).
\end{align}
For $r\geq R_k$ we clearly have
\begin{align*}
\sigma_i^k(\delta_k r)=\frac{1}{2\pi}\int_{B_r}h_i^k(\delta_ky)\,e^{v_i^k}.
\end{align*}
Up to now we get by \eqref{m=4} and \eqref{m=0} that
$$
\sigma_1^k(\delta_k R_k)=4+o(1), \qquad  \sigma_2^k(\delta_k R_k) = o(1).
$$

\

\noindent Let $\overline{v}_i^k(r)$ be the average of ${v}_i^k$ on $\partial B_r$, $i=1,2$. It will be important to study $\frac{d}{dr}\overline{v}_i^k(r), i=1,2$. In fact if
$$
\frac{d}{dr}\left(\overline{v}_i^k(r) +2\log r \right) >0, \qquad \mbox{for some } i,
$$
there is a possibility that for some larger radius $s$, $v_i^k$ becomes a slow decay component on $\partial B_s$.

The key fact is to observe that
\begin{align} \label{deriv}
\begin{split}
&\frac{d}{dr}\overline{v}_1^k(r)=\frac{-\sigma_1^k(\delta_k r)+\sigma_2^k(\delta_k r)}{r}, \\
&\frac{d}{dr}\overline{v}_2^k(r)=\frac{\sigma_1^k(\delta_k r)-\sigma_2^k(\delta_k r)}{r},
\end{split} \qquad R_k\leq r\leq\tau_k/\delta_k.
\end{align}
Clearly we have
\begin{align*}
R_k\frac{d}{dr}\overline{v}_1^k(R_k)=-4+o(1), \qquad R_k\frac{d}{dr}\overline{v}_2^k(R_k)=4+o(1).
\end{align*}

\

\noindent To continue the proof of Proposition \ref{pr4.1} we prove now two auxiliary lemmas.
\begin{lemma}
\label{le4.1}
Suppose there exists $L_k\in(R_k,\tau_k/\delta_k)$ such that
\begin{align}
\label{4.6}
v_i^k(y)\leq-2\log|y|-N_k, \qquad \mbox{for } R_k\leq|y|\leq L_k,~i=1,2
\end{align}
for some $N_k\rightarrow\infty$. Then $\sigma_i^k$ does not change much from $\delta_kR_k$ to $\delta_kL_k$: more precisely we have
\begin{align*}
\sigma_i^k(\delta_kL_k)=\sigma_i^k(\delta_kR_k)+o(1), \qquad i=1,2.
\end{align*}
\end{lemma}

\begin{proof}
Suppose the statement is false: then there exists $i$ such that $\sigma_i^k(\delta_kL_k)>\sigma_i^k(\delta_kR_k)+\delta$ for some $\delta>0$. Let us choose $\tilde{L}_k\in(R_k,L_k)$ such that
\begin{align}
\label{4.7}
\max_{i=1,2}\Big(\sigma_i^k(\delta_k\tilde{L}_k)-\sigma_i^k(\delta_kR_k)\Big)=\e,
\end{align}
where $\e>0$ is taken sufficiently small. Then by using \eqref{deriv} we have
\begin{align}
\label{4.8}
\frac{d}{dr}\overline{v}_1^k(r)\leq\frac{-4+\e+o(1)}{r}\leq-\frac{2+\e}{r}, \qquad R_k \leq r \leq \tilde L_k.
\end{align}
By Lemma \ref{le2.1} we observe that
$$
	v_i^k(x) = \overline{v}_i^k(|x|) + O(1), \qquad x \in B_{\tau_k/\d_k},
$$
where $\overline{v}_i^k(|x|)$ is the average of $v_i$ on $\partial B_{|x|}$. Reasoning as above and exploiting \eqref{4.8} jointly with \eqref{4.6} it is not difficult to show that
\begin{align*}
\int_{B_{\tilde L_k}\setminus B_{R_k}}e^{v_1^k}=O(1)\int_{B_{\tilde L_k}\setminus B_{R_k}}e^{\overline{v}_1^k(\tilde L_k)}=o(1).
\end{align*}
In other words we have $\sigma_1^k(\delta_k\tilde{L}_k)=\sigma_1^k(\delta_kR_k)+o(1).$

It follows that the maximum in \eqref{4.7} is attained for $i=2$, i.e.
\begin{align}\label{m=e}
\sigma_2^k(\delta_k\tilde{L}_k)=\sigma_2^k(\delta_kR_k)+\e.
\end{align}
On the other hand, since (\ref{4.6}) holds, as observed in Remark \ref{rem-fast} we get
\begin{align*}
\bigr(\sigma_1^k(\delta_k\tilde{L}_k)-\sigma_2^k(\delta_k\tilde{L}_k)\bigr)^2=
4\bigr(\sigma_1^k(\delta_k\tilde{L}_k)+\sigma_2^k(\delta_k\tilde{L}_k)\bigr) + o(1).
\end{align*}
Now we observe that
$$
\sigma_1^k(\delta_k\tilde{L}_k)=\sigma_1^k(\delta_kR_k)+o(1) = 4 + o(1),
$$
where we used \eqref{m=4}. Therefore we deduce that
$$
\sigma_2^k(\delta_k\tilde{L}_k)=o(1) \qquad \mbox{or} \qquad \sigma_2^k(\delta_k\tilde{L}_k)=12+o(1),
$$
which contradicts
$$
\sigma_2^k(\delta_k\tilde{L}_k)=\sigma_2^k(\delta_kR_k)+\e = \e+o(1),
$$
see \eqref{m=0} and \eqref{m=e}. Thus Lemma \ref{le4.1} is established.
\end{proof}

\

\noindent By the argument in Lemma \ref{le4.1} we observe the following fact: for $r\geq R_k$ either both $v_1, v_2$ have fast decay up to $\partial B_{\tau_k/\delta_k}$, namely
\begin{align}
\label{4.9}
v_i^k(y)\leq-2\log|y|-N_k, \qquad R_k\leq|y|\leq\tau_k/\delta_k,~i=1,2,
\end{align}
for some $N_k\to+\infty$, or there exists $L_k\in(R_k,\tau_k/\delta_k)$ such that $v_2$ has the following slow decay
\begin{align}
\label{4.10}
v_2^k(y)\geq-2\log L_k-C, \qquad |y|=L_k,
\end{align}
for some $C>0,$ while
\begin{align}
\label{4.11}
v_1^k(y)\leq-2\log|y| - N_k, \qquad R_k\leq|y|\leq L_k,
\end{align}
for some $N_k\to+\infty$. Indeed, we have noticed in Lemma \ref{le4.1} that if the local energy changes, $\sigma_2^k$ has to change first. Moreover, we have seen that $L_k$ can be chosen so that $\sigma_2^k(\delta_kL_k)-\sigma_2^k(\delta_kR_k)=\e$ for some $\e>0$ small. The following lemma treats the latter situation.

\begin{lemma}
\label{le4.2}
Suppose there exists $L_k\geq R_k$ such that (\ref{4.10}) and (\ref{4.11}) hold. We assume moreover that $L_k=o(1)\tau_k/\delta_k$. Then there exists $\tilde{L}_k$ such that $\tilde{L}_k/L_k\rightarrow\infty$ and $\tilde{L}_k=o(1)\tau_k/\delta_k$ with the following property:
\begin{align}
\label{4.12}
v_i^k(y)\leq-2\log|y|-N_k, \qquad |y|=\tilde{L}_k,~i=1,2,
\end{align}
for some $N_k\rightarrow\infty.$ Moreover
\begin{align}
\label{4.14}
\sigma_1^k(\delta_k\tilde{L}_k)=4+o(1), \qquad \sigma_2^k(\delta_k\tilde{L}_k)=12+o(1).
\end{align}
\end{lemma}

\begin{proof}
First we observe that by the choice of $L_k$ and being $\sigma_2^k(\delta_k R_k)=o(1)$ we can assume that $\sigma_2^k(\delta_kL_k)\leq\e$ for some $\e>0$ small. By the same reason in Lemma \ref{le4.1} we have
\begin{align*}
\frac{d}{dr}\overline{v}_1^k(r)\leq\frac{-4+\e +o(1)}{r}, \qquad R_k\leq r\leq L_k.
\end{align*}
Now we claim there exists $N>1$ such that
\begin{align}
\label{4.15}
\sigma_2^k(\delta_k(NL_k))\geq 8+o(1).
\end{align}
Suppose this does not hold. Then there exist $\e_0>0$ and $\tilde{R}_k\rightarrow\infty$ such that
\begin{align}
\label{4.16}
\sigma_2^k(\delta_k\tilde{R}_kL_k)\leq 8-\e_0.
\end{align}
Moreover, $\tilde{R}_k$ can be chosen to tend to infinity slowly so that by Lemma \ref{le2.1} and \eqref{4.11} we get
\begin{align}
\label{4.17}
v_1^k(y)\leq-2\log|y| - N_k, \qquad L_k\leq|y|\leq\tilde{R}_kL_k.
\end{align}
By Lemma \ref{le4.1}, (\ref{4.17}) implies $\sigma_1^k(\delta_kL_k)=\sigma_1^k(\delta_k\tilde{R}_kL_k)+o(1)$. Thus by (\ref{4.16})
\begin{align}
\label{4.18}
\frac{d}{dr}\overline{v}_2^k(r)\geq\frac{-4+\e+o(1)}{r}.
\end{align}
From (\ref{4.18}) and \eqref{4.10} it is not difficult to show
$$
\int_{B_{L_k\tilde{R}_k}\setminus B_{L_k}}e^{v_2^k}\rightarrow\infty,
$$
which contradicts the energy bound in (\ref{1.5}). Therefore (\ref{4.15}) holds.

\

\noindent By Lemma \ref{le2.1} we have
\begin{align*}
v_i^k(y)+2\log NL_k=\overline{v}_i^k(NL_k)+2\log (NL_k)+O(1), \qquad i=1,2,~|y|=NL_k.
\end{align*}
Therefore, by the assumptions we get
\begin{align*}
&\overline{v}_1^k(NL_k)\leq-2\log(NL_k)-N_k,\\
&\overline{v}_2^k(NL_k)\geq-2\log(NL_k)-C\geq-2\log(L_k)-C.
\end{align*}
Furthermore we can assert that
\begin{align*}
\overline{v}_2^k((N+1)L_k)\geq-2\log L_k-C,
\end{align*}
which, jointly with \eqref{4.15}, yields
\begin{align*}
\int_{B_{(N+1)L_k}}h_2^k(\delta_ky)\,e^{v_2^k(y)}dy\geq 8+\e_0,
\end{align*}
for some $\e_0>0$.

By the latter estimate we get
\begin{align*}
\frac{d}{dr}\overline{v}_2^k(r)\leq-\frac{2+\e_0}{r}, \qquad \mbox{for } r=(N+1)L_k.
\end{align*}
Therefore we can take $\tilde{R}_k\rightarrow\infty$ slowly such that $\tilde{R}_kL_k=o(1)\tau_k/\delta_k$ and
\begin{align*}
v_2^k(y)\leq(-2-\e_0)\log|y|-N_k, \qquad|y|=\tilde{R}_kL_k,\\
v_1^k(y)\leq-2)\log|y|-N_k, \qquad L_k\leq|y|\leq\tilde{R}_kL_k,
\end{align*}
where we have used also Lemma \ref{le2.1}. By Lemma \ref{le4.1} and \eqref{m=4} we have
\begin{align*}
\sigma_1^k(\delta_k\tilde{R}_kL_k)=\sigma_1^k(\delta_kL_k)+o(1)=4+o(1).
\end{align*}
Moreover, on $\partial B_{\tilde{R}_kL_k}$ both components have fast decay. Thus as in Remark \ref{rem-fast} we can compute the Pohozaev identity and observing \eqref{4.15} holds we get
\begin{align*}
\sigma_2^k(\delta_k\tilde{R}_kL_k)=12+o(1).
\end{align*}
Letting $\tilde{L}_k=\tilde{R}_kL_k$ we conclude the proof.
\end{proof}

\

\noindent Returning to the proof of Proposition \ref{pr4.1}, we are left with the region $\tilde{L}_k\leq|y|\leq\tau_k/\delta_k$. We distinguish between two cases.

\

Suppose first $L_k=O(1)\tau_k/\delta_k.$ Then by Lemma \ref{le2.1} we directly conclude that one component has fast decay while the other one has slow decay, see for example \eqref{4.10} and \eqref{4.11}. Moreover, we have observed that the energy in $B_{\tau_k}$ of the fast decaying component is a small perturbation of $4$. This is exactly the second alternative of Proposition \ref{pr4.1} and therefore the proof is concluded.

\

Suppose now $L_k=o(1)\tau_k/\delta_k$. In this case $\tilde{L}_k$ can chosen so that $o(1)\tau_k/\delta_k$. By using the local energies given by Lemma \ref{le4.2} we have
\begin{align*}
\frac{d}{dr}\overline{v}_1^k(r)=\frac{8+o(1)}{r}, \quad
\frac{d}{dr}\overline{v}_2^k(r)=-\frac{8+o(1)}{r}, \qquad \mbox{for } r=\tilde{L}_k.
\end{align*}
It follows that
\begin{align*}
\frac{d}{dr}\overline{v}_2^k(r)\leq-\frac{2+\e}{r}, \qquad r=\tilde{L}_k,
\end{align*}
for some $\e>0.$ As in Lemma \ref{le4.1} we conclude that $\sigma_2^k(r)$ does not change for $r\geq\tilde{L}_k$ unless $\sigma_1^k$ changes. By the same ideas of Lemmas \ref{le4.1}, \ref{le4.2} and by the argument of the first case $L_k=O(1)\tau_k/\delta_k$, either $v_1^k$ has slow decay up to $B_{\tau_k/\delta_k}$ or there is $\hat{L}_k=o(1)\tau_k/\delta_k$ such that
\begin{align*}
\sigma_1^k(\delta\hat{L}_k)=24+o(1), \qquad \sigma_2^k(\delta\hat{L}_k)=12+o(1).
\end{align*}
By the latter local energies we deduce
\begin{align*}
\frac{d}{dr}\overline{v}_1^k(r)=-\frac{12+o(1)}{r}, \quad
\frac{d}{dr}\overline{v}_2^k(r)=\frac{12+o(1)}{r}, \qquad \mbox{for } r=\hat{L}_k.
\end{align*}
Thus as before
\begin{align*}
\frac{d}{dr}\overline{v}_1^k(r)\leq-\frac{2+\e}{r},~r=\hat{L}_k,
\end{align*}
for some $\e>0.$ Now $\sigma_1^k(r)$ does not change for $r\geq\hat{L}_k$ unless $\sigma_2^k$ changes. By repeating the argument we get either $v_2^k$ has slow decay up to $B_{\tau_k/\delta_k}$ or there is $\overline{L}_k=o(1)\tau_k/\delta_k$ such that
\begin{align*}
\sigma_1^k(\delta\overline{L}_k)=24+o(1), \qquad \sigma_2^k(\delta\overline{L}_k)=40+o(1).
\end{align*}
Since after each step one of the local masses changes by a positive number, using  the uniform bound on the energy (\ref{1.5}) the process stops after finite steps. Eventually we get Proposition \ref{pr4.1}.
\end{proof}

\vspace{1cm}
\section{Combination of bubbling areas and proof of Theorem \ref{th}} \label{combination}

\medskip

In this section we present an argument for combining the blow-up areas. This strategy has been already used in several frameworks, see the Introduction for more details. The idea is the following: we start by considering blow-up points which are close to each other and we get a quantization property for each group, see the definition of group below. In particular, in each group the local energy of at least one component is a small perturbation of $4n$, for some $n\in\mathbb{N}$. Similarly, we combine the groups and we get the total energy of at least one component is a small perturbation of $4n$, for some $n\in\mathbb{N}$. Then, the conclusion follows by applying a global Pohozaev identity.

\medskip

\noindent {\bf Definition.} Let $Q_k=\{p_1^k,\cdots,p_q^k\}$ be a subset of $\Sigma_k$ with more than one point in it. $Q_k$ is called a group if
\begin{enumerate}
  \item dist$(p_i^k,p_j^k)\sim $ dist$(p_s^k,p_t^k)$,\\
 where $p_i^k,p_j^k,p_s^k,p_t^k$ are any points in $Q_k$ such that $p_i^k\neq p_j^k$ and $p_t^k\neq p_s^k.$ \vspace{0.3cm}

  \item $\dfrac{\mbox{dist}(p_i^k,p_j^k)}{\mbox{dist}(p_i^k,p_k)}\rightarrow0$, \\
  for any $p_k\in\Sigma_k\setminus Q_k$ and for all $p_i^k,p_j^k\in Q_k$ with $p_i^k\neq p_j^k.$
\end{enumerate}

\

\noindent {\em Proof of Theorem \ref{th}:} As in Section \ref{asymptotic}, by considering suitable translated functions we may assume without loss of generality that $0\in \Sigma_k$ for any $k$. Let $2\tau_k$ be the distance between $0$ and $\Sigma_k\setminus\{0\}.$ To describe the group $G_0$ that contains $0$ we proceed in the following way: if for any $z_k\in\Sigma_k\cap\partial B(0,2\tau_k)$ we have $dist(z_k,\Sigma_k\setminus\{z_k\})\sim\tau_k,$ then $G_0$ contains at least two points. On the other hand, if there exists $z_k'\in\partial B(0,2\tau_k)\cap\Sigma_k$ such that $\tau_k/dist(z_k',\Sigma_k\setminus z_k')\rightarrow\infty$ we let $G_0$ be $0$ itself. By the definition of group, all points of $G_0$ are in $B(0,N\tau_k)$ for some $N$ independent of $k$. Let
\begin{align*}
\begin{split}
&\tilde{v}_1^k(y)=u_k(\tau_ky)+2\log\tau_k,  \\
&\tilde{v}_2^k(y)=-u_k(\tau_ky)+2\log\tau_k,
\end{split}
\qquad |y|\leq\tau_k^{-1},
\end{align*}
which satisfy
\begin{equation}
\label{6.1}
\Delta\tilde{v}_1^k(y)+h_1^k(\tau_ky)\,e^{\tilde{v}_1^k(y)}-h_2^k(\tau_ky)\,e^{\tilde{v}_2^k(y)}=0, \qquad |y|\leq\tau_k^{-1}.
\end{equation}

Let $0,Q_1,\cdots,Q_s$ be the images of members of $G_0$ after scaling from $y$ to $\tau_ky.$ We observe that $Q_i\in B_N.$ By Proposition \ref{pr4.1} at least one of $\tilde{v}_i^k$ decays fast on $\partial B_1.$ Without loss of generality we assume
\begin{equation*}
\tilde{v}_1^k\leq -2\log|y|-N_k, \qquad \mathrm{on}~\partial B_1,
\end{equation*}
for some $N_k\rightarrow\infty$. Moreover, we know by Proposition \ref{pr4.1} that
\begin{equation*}
\sigma_1^k(\tau_k)=4\tilde n + o(1),
\end{equation*}
for some $\tilde n\in\mathbb{N}$.

Furthermore, by Lemma \ref{le2.1} we can assert
\begin{equation*}
\tilde{v}_1^k\leq -2\log|y|-N_k, \qquad \mathrm{on}~\partial B_1(Q_j), \quad j=1,\dots,s.
\end{equation*}
Still by using Proposition \ref{pr4.1} we conclude that the the local energies around $Q_j$ are
$$
	\frac{1}{2\pi} \int_{B_1(Q_j)} h_1(\tau_k y)\, e^{\tilde{v}_1^k} = 4n_j + o(1), \qquad j=1,\dots,s,
$$
for some $n_j\in\mathbb{N}, j=1,\dots,s$.
Let $2\tau_kL_k$ be the distance from $0$ to the nearest group from $G_0.$ By the definition of group we have $L_k\rightarrow\infty.$ By using Lemma \ref{le2.1} and Lemma \ref{le4.1}, using the same reason in Lemma \ref{le3.1} we can find $\tilde{L}_k\leq L_k,~\tilde{L}_k\rightarrow\infty$ slowly such that the energy of $\tilde{v}_1^k$ in $B_{\tilde{L}_k}(0)$ does not change so much and such that $\tilde{v}_2^k$ has fast decay on $\partial B_{\tilde{L}_k}(0)$:
\begin{equation}
\label{6.2}
\sigma_1^k(\tau_k \tilde L_k)=4n + o(1),
\end{equation}
for some $n\in\mathbb{N}$ and
\begin{equation}
\label{6.3}
\tilde{v}_2^k(y) \leq -2\log\tilde{L}_k-N_k, \qquad \mbox{for } |y|=\tilde{L}_k,
\end{equation}
for some $N_k\to+\infty$.

Since on $\partial B_{\tilde{L}_k}$ both components $\tilde{v}_1^k, \tilde{v}_2^k$ have fast decay we apply the argument of Remark \ref{rem-fast} and compute the Pohozaev identity. Since (\ref{6.3}) holds we get also
$$
\sigma_2^k(\tau_k \tilde L_k)=4\bar n + o(1),
$$
for some $\bar n\in\mathbb{N}$. Putting together the Pohozaev identity with the fact that both local masses are a small perturbation of a multiple of $4$ we conclude $(\sigma_1^k(\tau_k\tilde{L}_k),\sigma_2^k(\tau_k\tilde{L}_k))$ is a $o(1)$ perturbation of one of the two following types:
\begin{equation}\label{alt}
\bigr(2\tilde m(\tilde m+1),2\tilde m(\tilde m-1)\bigr)\qquad \mbox{or} \qquad \bigr(2\tilde m(\tilde m-1),2\tilde m(\tilde m+1)\bigr),
\end{equation}
for some $\tilde m\in\mathbb{N}.$ Without loss of generality we assume the former happens. As in the proof of Proposition \ref{pr4.1} we have
\begin{align*}
\begin{split}
&\overline{u}_k(\tau_k\tilde{L}_k)\leq-2\log(\tau_k\tilde{L}_k)-N_k, \\
&\frac{d}{dr}\overline{u}_k<-\frac{2+\e}{r},
\end{split}
\qquad \mbox{for } r=\tau_k\tilde{L}_k,
\end{align*}
for some $\e>0$. Now, following the steps in the proof of Proposition \ref{pr4.1}, as $r$ grows from $\tau_k\tilde{L}_k$ to $\tau_kL_k$, the following three cases may happen:

\

\noindent{\bf Case 1.} Both $u_k$ and $-u_k$ have fast decay up to $|x|=\tau_kL_k$:
\begin{align*}
|u_k(x)|\leq -2\log|x| -N_k, \qquad \tau_k\tilde{L}_k\leq|x|\leq\tau_kL_k,
\end{align*}
for some $N_k\to+\infty$. In this case, by Lemma \ref{le4.1} we have
\begin{equation*}
\sigma_i^k(\tau_kL_k)=\sigma_i^k(\tau_k\tilde{L}_k)+o(1), \quad i=1,2.
\end{equation*}
\medskip

\noindent{\bf Case 2.} There exists $L_{1,k}\in(\tilde{L}_k,L_k)$, $L_{1,k}=o(1)L_k $ such that
\begin{align*}
-u_k(x)&\geq -2\log L_{1,k}-C, \qquad \mbox{for } |x|=\tau_kL_{1,k}.
\end{align*}
By the argument of Lemma \ref{le4.2} we can find a suitable $L_{2,k}\geq L_{1,k}$ such that
\begin{align*}
|u_k(x)|&\leq-2\log L_{2,k}-N_k, \qquad \mbox{for } |x|=\tau_kL_{2,k},
\end{align*}
for some $N_k\to+\infty$ and $\bigr(\sigma_1^k(\tau_kL_{2,k}),\sigma_2^k(\tau_kL_{2,k})\bigr)$ is a $o(1)$ perturbation of
$$
\bigr(2\bar m(\bar m-1),2\bar m(\bar m+1)\bigr),
$$
for some $\bar m\in\mathbb{N}$.
\medskip

\noindent{\bf Case 3.} $-u_k$ has slow decay for $|x|=\tau_kL_k$, i.e.
\begin{align*}
-u_k(x)\geq-2\log\tau_kL_k-C, \qquad |x|=\tau_kL_k,
\end{align*}
for some $C>0$ and
$$
\sigma_1^k(\tau_kL_k)=\sigma_1^k(\tau_k\tilde{L}_k)+o(1)=4\bar n +o(1).
$$
Moreover, on $\partial B_{\tau_kL_k}(0)$, $u_k$ is still the fast decaying component.

\

The only region we have still to analyze is $B_{\tau_kL_k}(0)\setminus B_{\tau_kL_{2,k}}(0)$ when the second case above happens. However, the argument here is the same as before. At the end, in any case on $\partial B_{\tau_kL_k}(0)$ at least one of the two component $u_k,-u_k$ has fast decay and its energy is a small perturbation of a multiple of $4$.

\

Finally, we have to combine the groups. The procedure is very similar to the combination of bubbling disks as we have done before. For example, we start by considering groups which are close to each other: take $B_{\e_k}(0)$ for some $\e_k\rightarrow0$ such that all the groups in $B_{2\e_k}(0)$, say $G_0,G_1,\cdots,G_t$, (namely $(\Sigma_k\setminus(\cup_{i=0}^tG_i))\cap B(0,2\e_k)=\emptyset$) satisfy
\begin{align*}
&\mbox{dist}(G_i,G_j) \sim \mbox{dist}(G_l,G_q), \qquad \forall i\neq j, l\neq q, \\
&\mbox{dist}(G_i,G_j)=o(1)\e_k, \qquad \forall i,j=0,\cdots,t,~i\neq j.
\end{align*}
The second property implies that the groups outside $B_{2\e_k}(0)$ are far away from the groups inside the ball. By the above assumptions, letting $\e_{1,k}=dist(G_0,G_j)$, for some $j\in\{1,\dots,t\}$ we have that all $G_0,\cdots,G_t$ are in $B_{N\e_{1,k}}(0)$ for some $N>0$ independent of $k$. Without loss of generality let $u_k$ be the fast decaying component on $\partial B_{N\e_{1,k}}(0).$ Then we have
\begin{equation*}
\sigma_1^k(N\e_{1,k})=\sigma_1^k(\tau_kL_k)+4\hat m+o(1),
\end{equation*}
for some $\hat m\in\mathbb{N}$ because by Lemma \ref{le2.1} $u_k$ is also a fast decaying component for $G_0,\cdots,G_t$.

\

Now, as before we have three possible cases. If $-u_k$ also has fast decay on $\partial B_{N\e_{1,k}}(0),$  $\sigma_2^k(N\e_{1,k})$ is also a small perturbation of a multiple of $4$ and we get the quantization as in \eqref{alt}.

If instead
\begin{align*}
-u_k(x)\geq-2\log N\e_{1,k}-C, \qquad |x|=N\e_{1,k},
\end{align*}
then as before we can find $\e_{2,k}$ in $(N\e_{1,k},\e_k)$ such that
\begin{align*}
|u_k(x)|\leq -2\log\e_{2,k}-N_k, \qquad |x|=\e_{2,k},
\end{align*}
for some $N_k\rightarrow\infty.$ Moreover
\begin{align*}
\sigma_1^k(\e_{2,k})=\sigma_1^k(N\e_{1,k})+o(1).
\end{align*}
Thus, by the usual argument we get the quantization as in \eqref{alt}.

The last possibility is
\begin{align*}
\sigma_1^k(\e_k)=\sigma_1^k(N\e_{1,k})+o(1)=\sigma_1^k(\tau_kL_k)+4\hat m+o(1)
\end{align*}
and
\begin{align*}
-u_k(x)\geq-\log\e_k-C, \qquad |x|=\e_k,
\end{align*}
for some $C>0$. In this case $u_k$ is the fast decaying component on $\partial B_{\e_k}(0).$

Observe that at the end, in any case on $\partial B_{\e_k}(0)$ at least one of the two component $u_k,-u_k$ has fast decay and its energy is a small perturbation of a multiple of $4$.

\

With this argument we continue to include groups further away from $G_0$. Since by construction we have only finite blow-up disks this procedure only needs to be applied finite times. Finally, reasoning as in Lemma \ref{le3.1} we can take $s_k\rightarrow0$ such that $\Sigma_k\subset B_{s_k}(0)$ and both component $u_k, -u_k$ have fast decay on $\partial B_{s_k}(0)$:
\begin{align*}
|u_k(x)|\leq -2\log s_k-N_k, \qquad \mbox{for } |x|=s_k,
\end{align*}
for some $N_k\rightarrow\infty.$ Therefore we have that $(\sigma_1^k(s_k),\sigma_2^k(s_k)$ is a $o(1)$ perturbation of one of the two following types:
$$
\bigr(2m(m+1),2m(m-1)\bigr)\qquad \mbox{or} \qquad \bigr(2m(m-1),2m(m+1)\bigr),
$$
for some $m\in\mathbb{N}$. On the other hand, notice that by definition
\begin{align*}
\sigma_i=\lim_{k\rightarrow\infty}\lim_{s_k\rightarrow0}\sigma_i^k(s_k), \qquad i=1,2.
\end{align*}
It follows that $\sigma_1, \sigma_2$ satisfy the quantization property of Theorem \ref{th} and the proof is completed.
\begin{flushright}
$\square$
\end{flushright}

\vspace{1cm}


\begin{thebibliography}{99}


\bibitem{ba} A. Bahri and J.M. Coron, The scalar curvature problem on the standard three dimensional sphere. \emph{J. Funct. Anal.} 95 (1991), no. 1, 106-172.

\bibitem{bjmr} L. Battaglia, A. Jevnikar, A. Malchiodi and D. Ruiz, A general existence result for the Toda system on compact surfaces. To appear in {\em Adv. Math}.

\bibitem{bat} L. Battaglia and G. Mancini, A note on compactness properties of singular Toda systems. To appear in \emph{Atti Accad. Naz. Lincei Rend. Lincei Mat. Appl.}

\bibitem{bc1} H. Brezis and J.M. Coron, Multiple solutions of H-systems and Rellich's conjecture. {\em Commun. Pure Appl. Math.} 37 (1984), 149-187.

\bibitem{bm} H. Brezis and F. Merle, Uniform estimates and blow-up behavior for solutions of $-\Delta u=V(x)\,e^u$ in two dimensions.
{\em Comm. Partial Differential Equation} 16 (1991), 1223-1254.

\bibitem{clmp} E. Caglioti, P.L. Lions, C. Marchioro and M. Pulvirenti, A special class of stationary flows for two-dimensional Euler equations: a statistical mechanics description. \emph{Comm. Math. Phys.} 143 (1992), no. 3, 501-525.

\bibitem{cha}  S.Y.A. Chang, M.J. Gursky and P.C. Yang, The scalar curvature equation on $2$- and $3$- spheres. \emph{Calc. Var. and Partial Diff. Eq.} 1 (1993), no. 2, 205-229.

\bibitem{cha2} S.Y.A. Chang and P.C. Yang, Prescribing Gaussian curvature on $S^2$. \emph{Acta Math.} 159 (1987), no. 3-4, 215-259.

\bibitem{cl2} C.C. Chen and C.S. Lin, Estimate of the conformal scalar curvature equation via the method of
moving planes. II. {\em J. Differential Geom.} 49 (1998), no. 1, 115-178.

\bibitem{cl} W.X. Chen and C.M. Li, Classification of solutions of some nonlinear elliptic equations. {\em Duke
Math. J.} 63 (1991), no. 3, 615-622.

\bibitem{c} A.J. Chorin, Vorticity and Turbulence, Springer, New York, 1994.

\bibitem{ew} P. Esposito and J. Wei, Non-simple blow-up solutions for the Neumann two-dimensional sinh-Gordon equation. \emph{Calc. Var. PDEs} 34 (2009), no. 3, 341-375.

\bibitem{gp} M. Grossi and A. Pistoia, Multiple blow-up phenomena for the sinh-Poisson equation. {\em Arch. Ration. Mech. Anal.} 209 (2013), no. 1, 287-320.

\bibitem{han} Z.C. Han, Asymtotic approach to singular solutions for nonlinear elliptic equations involving critical Sobolev exponent. \emph{Annales de l'I.H.P., section C} 8 (1991), no. 2, 159-174.

\bibitem{jev} A. Jevnikar, An existence result for the mean field equation on compact surfaces in a doubly supercritical regime. \emph{Proc. Royal Soc. Edinb.} 143 (2013), no. 5, 1021-1045.

\bibitem{jev2} A. Jevnikar, Multiplicity results for the mean field equation on compact surfaces. To appear in \emph{Adv. Nonlinear Stud.}

\bibitem{jev3} A. Jevnikar. New existence results for the mean field equation on compact surfaces via degree theory. To appear in \emph{Rend. Semin. Mat. Univ. Padova}.

\bibitem{jo} J. Jost, Two-dimensional geometric variational problems. Pure and Applied Mathematics, Wiley, Chichester, 1991.

\bibitem{jw} J. Jost and G.F. Wang, Analytic aspects of the Toda system. I. A Moser-Trudinger inequality. {\em Comm. Pure Appl. Math.} 54 (2001), no. 11, 1289-1319.

\bibitem{jwyz} J. Jost, G. Wang, D. Ye and C. Zhou, The blow-up analysis of solutions of the elliptic sinh-Gordon equation. {\em Calc. Var. PDEs} 31 (2008), 263-276.

\bibitem{jm} G. Joyce and D. Montgomery, Negative temperature states for a two dimensional guiding center plasma. {\em J. Plasma Phys.} 10 (1973), 107-121.

\bibitem{ki} M.K.H. Kiessling, Statistical mechanics of classical particles with logarithmic interactions. \emph{Comm. Pure Appl. Math.} 46 (1993), no. 1, 27-56.

\bibitem{ly} Y.Y. Li, Prescribing scalar curvature on $S^n$ and related problems. I. {\em J. Differential Equations}
120 (1995), no. 2, 319-410.

\bibitem{li} Y.Y. Li, Harnack type inequality: the method of moving planes. \emph{Comm. Math. Phys.} 200 (1999), no. 2, 421-444.

\bibitem{li-sha} Y.Y. Li and I. Shafrir, Blow-up analysis for solutions of $-\Delta u = V\, e^u$ in dimension two.  \emph{Indiana Univ. Math. J.} 43 (1994), no. 4, 1255-1270.

\bibitem{lwy} C.S. Lin, J.C. Wei and W. Yang, Degree counting and shadow system for SU(3) Toda system: one bubbling, preprint, 2014, arXiv http://arxiv.org/abs/1408.5802.

\bibitem{lwz0} C.S. Lin, J.C. Wei and L. Zhang,  Classification of blowup limits for $SU(3)$ singular Toda systems. {\em Anal. PDE} 8 (2015), no. 4, 807-837.

\bibitem{l} P.L. Lions, On Euler Equations and Statistical Physics, Scuola Normale Superiore, Pisa, 1997.

\bibitem{mal} A. Malchiodi, Topological methods for an elliptic equation with exponential nonlinearities. \emph{Discrete Contin. Dyn. Syst.} 21 (2008), no. 1, 277-294.

\bibitem{mp} C. Marchioro and M. Pulvirenti, Mathematical Theory of Incompressible Nonviscous Fluids, Springer, New York, 1994.

\bibitem{n} P.K. Newton, The N-Vortex Problem: Analytical Techniques, Springer, New York, 2001.

\bibitem{os} H. Ohtsuka and T. Suzuki, Mean field equation for the equilibrium turbulence and a related functional inequality. {\em Adv. Differ. Equ.} 11 (2006), 281-304.

\bibitem{os1} H. Ohtsuka and T. Suzuki, A blowup analysis of the mean field equation for arbitrarily signed vortices. \emph{Self-similar solutions of nonlinear PDE} 74 (2006), 185-197.

\bibitem{o} L. Onsager, Statistical hydrodynamics. {\em Nuovo Cimento} 6 (1949), 279-287.

\bibitem{pa} T.H. Parker, Bubble tree convergence for harmonic maps. \emph{J. Differ. Geom.} 44 (1996), 595-633.

\bibitem{pr} A. Pistoia and T. Ricciardi, Concentrating solutions for a Liouville type equation
with variable intensities in 2D-turbulence, Preprint. ArXiv http://arxiv.org/pdf/1507.01449v1.pdf.

\bibitem{rt} T. Ricciardi and R. Takahashi, Blow-up behavior for a degenerate elliptic sinh-Poisson equation with
variable intensities, Preprint. ArXiv http://arxiv.org/pdf/1507.01449.pdf.

\bibitem{rtzz} T. Ricciardi, R. Takahashi, G. Zecca and X. Zhang, On the existence and blow-up of solutions for a mean field equation with variable intensities, Preprint. ArXiv http://arxiv.org/pdf/1509.05204.pdf.

\bibitem{su} J. Sacks and K. Uhlenbeck, The existence of minimal immersions of 2-spheres. {\em Ann. Math.} 113 (1981), 1-24.

\bibitem{s3} R. Schoen, Stanford Classnotes.

\bibitem{s}R. Schoen and D. Zhang, Prescribed scalar curvature on the n-sphere. \emph{Calc. Var.} 4 (1996), no. 1, 1-25.

\bibitem{s1} K. Steffen, On the nonuniqueness of surfaces with prescribed constant mean curvature spanning a given contour. {\em Arch. Rat. Mech. Anal.} 94, (1986) 101-122.

\bibitem{s2} M. Struwe, Nonuniqueness in the Plateau problem for surfaces of constant mean curvature. {\em Arch. Rat. Mech. Anal.} 93, (1986) 135-157.

\bibitem{tar} G. Tarantello, Analytical, geometrical and topological aspects of a class of mean field equations on surfaces. \emph{Discrete Contin. Dyn. Syst.} 28 (2010), no. 3, 931-973.

\bibitem{w1}
H.C. Wente, Large solutions to the volume constrained Plateau problem. \emph{Arch. Rational Mech.
Anal.} 75 (1980/81), no. 1, 59-77.

\bibitem{w2}
H.C. Wente, Counterexample to a conjecture of H. Hopf. \emph{Pacific J. Math.} 121 (1986), no. 1, 193-243.

\bibitem{zh} C. Zhou, Existence result for mean field equation of the equilibrium turbulence in the super critical case. \emph{Commun. Contemp. Math.} 13 (2011), no 4, 659-673.




\end{thebibliography}
\end{document}